\documentclass[reqno, 11pt, american]{amsart}
\title{High-energy eigenfunctions of point perturbations of the Laplacian}

\author[S. Verdasco]{Santiago Verdasco}
\address{M$^2$ASAI. Universidad Politécnica de Madrid, ETSI Navales, Avda. de la Memoria, 4, 28040, Madrid, Spain.}
\email{santiago.verdasco@upm.es}



\usepackage[utf8]{inputenc}
\usepackage[T1]{fontenc}
\usepackage{geometry}
\usepackage[sorting = nyt, sortcites = true, giveninits = true, isbn = false]{biblatex}                          
\addbibresource{Repository/GeneralBib.bib}
\AtEveryBibitem{\clearfield{number}\clearfield{pages}}
\renewbibmacro{in:}{}

\usepackage{amssymb}
\usepackage{mathrsfs} 
\usepackage{mathtools} 
\usepackage{amsthm} 
\usepackage{thmtools} 
\usepackage{dsfont}

\usepackage[dvipsnames]{xcolor}
\usepackage[shortlabels]{enumitem} 
\usepackage[colorlinks = true]{hyperref}


\geometry{top=3cm, bottom=3cm, left=2.2cm, right=2.2cm, paper=a4paper}

\setlength{\parskip}{4pt}



\theoremstyle{plain}
\declaretheorem[parent = section]{Theorem}
\declaretheorem[sibling = Theorem]{Lemma}
\declaretheorem[sibling = Theorem]{Corollary}
\declaretheorem[name = Proposition, sibling = Theorem]{Prop}

\theoremstyle{definition}

\declaretheorem[sibling = Theorem]{Remark}

\numberwithin{equation}{section} 




\def \eps{ \varepsilon }
\DeclareDocumentCommand{\R}{O{} O{}}{ \mathbb{R}_{#2}^{#1} }
\DeclareDocumentCommand{\C}{O{} O{}}{ \mathbb{C}_{#2}^{#1} }
\DeclareDocumentCommand{\S}{O{} O{}}{ \mathbb{S}_{#2}^{#1} }
\DeclareDocumentCommand{\T}{O{} O{}}{ \mathbb{T}_{#2}^{#1} }
\DeclareDocumentCommand{\Z}{O{} O{}}{ \mathbb{Z}_{#2}^{#1} }
\def \N{\mathbb{N}}

\def \Cinf{\mathcal{C}^{\infty}}
\def \CinfK{\mathcal{C}_{c}^{\infty}}

\def \charf{\mathds{1}}
\def \fdot{\, \cdot \,}

\def \BigO{\mathcal{O}}



\DeclarePairedDelimiter{\abs}{\lvert}{\rvert}

\NewDocumentCommand{\norm}{s O{} m O{}}
    {
        \IfBooleanTF{#1}
            {\ensuremath{{\left\lVert #3 \right\rVert}_{#4}}}
            {{#2\lVert {#3} #2\rVert}_{#4}}
    } 

\NewDocumentCommand{\ip}{s O{} m m O{}}
    {
        \IfBooleanTF{#1}
            {\ensuremath{{\left\langle #3 \, \middle| \, #4 \right\rangle}_{#5}}}
            {{#2\langle #3 \, #2| \, #4 \rangle}_{#5}}
    }

\DeclarePairedDelimiter{\bra}{\langle}{|}
\DeclarePairedDelimiter{\ket}{|}{\rangle}

\DeclarePairedDelimiter{\floor}{\lfloor}{\rfloor}

\DeclareRobustCommand{\rchi}{{\mathpalette\irchi\relax}} 
\newcommand{\irchi}[2]{\raisebox{\depth}{$#1\chi$}} 

\newcommand*{\cl}[1]{\mkern 2mu\overline{\mkern-1.5mu#1\mkern-1.5mu}\mkern 2mu} 
\newcommand*{\conj}[1]{ \mkern 2mu \overline{\mkern-1mu #1 \mkern-1mu} \mkern 2mu}
\newcommand*{\D}[1]{\mathop{\mathrm{d}#1}}

\DeclareMathOperator{\spn}{Span}
\DeclareMathOperator{\Id}{Id}
\NewDocumentCommand{\Span}{O{} O{} m}{\spn_{#1} #2\{ #3 \, #2\}}



\let\Re\relax
\let\Im\relax
\DeclareMathOperator{\Re}{\mathfrak{Re}}
\DeclareMathOperator{\Im}{\mathfrak{Im}}
\DeclareMathOperator{\dom}{dom}

\DeclareMathOperator{\Lap}{\Delta}
\def \sqLap{\sqrt{\Lap}}
\DeclareMathOperator{\supp}{supp} 
\DeclareMathOperator{\Opp}{Op}
\DeclareMathOperator{\vol}{vol}
\DeclareMathOperator{\Spec}{Spec}
\DeclareMathOperator{\card}{\#}
\let\div\relax
\DeclareMathOperator{\div}{div}

\NewDocumentCommand{\Op}{O{} O{} m}{\ensuremath{\Opp_{#1}^{#2} \left( #3 \right)}}



\def \sqLap{\sqrt{\Lap}}
\def \SDM{semiclassical defect measure}
\def \SDMs{semiclassical defect measures}

\def \matA{\mathbb{A}}


\NewDocumentCommand{\sympf}{O{} m m O{}}{{#1\{ #2 \, , #3 #1\}}_{#4}}


\begin{document}

\begin{abstract}
In this paper, we explore the high-frequency properties of eigenfunctions of point perturbations of the Laplacian on a compact Riemannian manifold. These systems cannot be obtained as the quantization of a classical Hamiltonian, as the effect of the perturbation amounts to prescribing certain boundary conditions on a discrete set of points. We are interested in understanding to what extent the high-frequency behavior of eigenfunctions is governed by the global dynamics of the geodesic flow in the manifold (the classical flow corresponding to the unperturbed Laplacian). We prove that as soon as the Laplacian is perturbed by a finite set of point scatterers satisfying a \emph{non-focality} condition, namely, that the family of geodesics starting from this set and coming back to it has zero measure, semiclassical measures corresponding to high-frequency sequences of eigenfunctions are invariant under the geodesic flow. Invariance may fail when the non-focality condition does not hold, as is shown in a companion article [DOI:\href{https://doi.org/10.48550/arXiv.2601.19701}{10.48550/arXiv.2601.19701}]. Our results are based on a quasimode construction that requires improved estimates on the spectral function of the Laplacian on the set of scatterers.
\end{abstract}

\maketitle

\section{Introduction}
\label{Sec: Introduction}

The analysis of high-frequency eigenfunctions of the Laplace-Beltrami operator (or Laplacian) on a smooth, compact Riemannian manifold $(M,g)$ has been the subject of intense research in the past sixty years from very different points of view; see, for instance, the monograph \cite{Zelditch2017}. 

Throughout this article $\Lap$ will denote the positive Laplacian on $(M,g)$. When $V\in L^\infty(M,\R)$, the Schrödinger operator $\Lap+V$ is self-adjoint on $L^2(M)$,  and its spectrum is a discrete sequence of real numbers that tends to infinity:
\begin{equation*}
    \Spec( \Lap+V )=\{ \lambda_n^2 \colon n \in \N \} \ .
\end{equation*}
The corresponding eigenfunctions solve:
\begin{equation}\label{Eq:eigenfdef}
    (\Lap + V) u_{\lambda_n} = \lambda_{l}^2 u_{\lambda_n},\qquad  \norm{u_{\lambda_n}}[L^2(M)] = 1 \ .
\end{equation}
When studying the structure of those regions of $M$ where the sequence of densities $(\abs{u_{\lambda_n}(x)}^2\D{x})_{n \in \N}$ may concentrate, one usually considers the corresponding sequence of \emph{microlocal lifts} of these densities, the Wigner distributions. If $\Opp_{h}$ for $h > 0$ stands for the semiclassical Weyl quantization on $M$, the \emph{Wigner distribution} of a function $u \in L^2(M)$ is
\begin{equation} \label{Eq: Wigner dist def}
    W_{h}[u](a) \coloneqq \ip{u}{\Op[h]{a} u}[L^2(M)] \ , \qquad a \in \CinfK(T^*M) \ .
\end{equation}
Thanks to the Calderon-Villantcourt theorem, $W_{h}[u]$ is, in fact, a distribution on $T^*M$ for every $u \in L^2(M)$, $h > 0$. The accumulation points of $(W_{h_n}[u_{\lambda_n}])_{n \in \N}$, for a sequence of eigenfunctions $(u_{\lambda_n})_{n \in \N}$ satisfying \eqref{Eq:eigenfdef} and $h_n \coloneqq (\lambda_{n})^{-1} \to 0^+$, are always probability measures on $T^*M$. These are called \emph{\SDMs{}} of $\Lap {}+ V$, and capture concentration and oscillation-type phenomena developed by the sequence $(u_n)_{n \in \N}$ of high-frequency eigenfunctions.

Characterizing the set of \SDMs{} of $\Lap + V$ and, in particular, clarifying how this set depends on the global geometry of $M$ and the properties of the perturbation $V$ is, in general, a very hard problem. However, as soon as $V\in\Cinf(M,\R)$, it is well known that any \SDM{} $\mu$ of $\Lap+V$ belongs to $\mathcal{P}_{\mathrm{inv}}(S^*M)$, the set  of probability measures on  $T^*M$ satisfying:
\begin{enumerate}
    \item $\supp \mu \subseteq S^*M = \{ (x, \xi) \in T^*M \colon \abs{\xi}_x = 1\}$.
    \item $\mu$ is invariant by the geodesic flow $\{\phi_t\}_{t \in \R}$ on $S^*M$, 
    \begin{equation*}
        (\phi_t)_* \mu = \mu,\quad \text{for all $t \in \R$} \ .
    \end{equation*}
\end{enumerate}
However, except for very few cases (when $M$ is a compact, rank-one symmetric space and $V=0$, see \cite{JakobsonZelditch1999, Macia2008, AzagraMacia2010}), these two properties fail to completely characterize the set of \SDMs{} of $\Lap+V$. This set is, in general, strictly smaller than $\mathcal{P}_{\mathrm{inv}}(S^*M)$.  If $M$ has negative sectional curvature and $V=0$, \SDMs{} have positive entropy \cite{AnantharamanNonnenmacher2007, Anantharaman2008, Riviere2010} (this prevents uniform measures supported on closed geodesics from being a \SDM{}) and are of full support  when $d = 2$ \cite{DyatlovJin2018, DyatlovJinNonnenmacher2022}. The \emph{quantum unique ergodicity} conjecture \cite{ColindeVerdiere1985, RudnickSarnak1994, Snirelman1974, Zelditch1987} states that the set of \SDMs{} of $\Lap$ reduces to the Liouville measure when $M$ is a surface of constant negative curvature. This was proved under an additional symmetry hypothesis in \cite{Lindenstrauss2006}. When $M$ is a flat torus, the set of \SDMs{} of $\Lap$ is  also strictly smaller than the set of invariant measures \cite{Jakobson1997}.

The presence of a bounded smooth perturbation $V$ induces an additional set of restrictions for an invariant measure being a \SDM{} of $\Lap + V$. This was investigated in \cite{MaciaRiviere2016, MaciaRiviere2019, Macia2021} for perturbations on the sphere or Zoll manifolds, or \cite{AnantharamanMacia2014, AnantharamanFer-KammererMacia2015, AnantharamanLeautaudMacia2016, MaciaRiviere2018} when the geodesic flow is completely integrable.  In these situations, \SDMs{} enjoy additional invariance properties under certain flows defined in terms of $V$.

In this paper, we address these questions in the case of a certain class of singular, unbounded perturbations, namely point-perturbations of $\Lap$. These perturbations are used to model Dirac delta potentials $\delta_{q}$ centered on a finite family of points $q \in Q \subseteq M$ that are coupled according to some self-adjoint boundary conditions on $Q$; these perturbations are non-trivial only for dimensions $d = 1, 2, 3$. In the Mathematical Physics literature, these are usually denoted as 
\[
\Lap {}+ {\textstyle \sum_{q, p \in Q}} \ \delta_{p} \matA \delta_{q} \ .
\]
However, we opt for an alternative notation, $\Lap_{L}$, where the self-adjoint extension is parameterized by a certain Lagrangian-Grassmannian. The connection between these two approaches is described in detail in Section \ref{subSec: Point-pert of Lap. L and matA}.

Let $Q \subset M$ denote the set of points at which the deltas are supported, suppose $d \coloneqq \dim(M) = 2,3$, and $N \coloneqq \card Q$. For every Lagrangian subspace $L$ of $\C[N] \times \C[N]$, equipped with the standard symplectic form (see \eqref{Eq: sympl coord of dom(A^*)/dom(A)}), $\Lap_L$ denotes a unique self-adjoint extension of $\Lap|_{\CinfK(M \setminus Q)}$. Point-perturbations of $\Lap$ are singular perturbations in the sense that they are no longer pseudodifferential operators on $M$; therefore, it is not clear what their classical analogs are. They are also known in the literature as point scatterers \cite{Ueberschaer2014}, or delta potentials \cite{Hillairet2002}; the Lagrangian subspace $L$ represents \emph{coupling parameters} between the scatterers. These operators $\Lap_L$ are sometimes called \emph{pseudo-Laplacians} \cite{ColindeVerdiere1982}. These point-perturbations can also be understood as conical singularities of angle $2\pi$ on any given point of a manifold (see \cite{Hillairet2008}). 

The spectrum of these operators is still discrete and tends to infinity; in fact, point-perturbations behave spectrally as finite-rank perturbations: eigenfunctions of $\Lap$ that vanish on $Q$ are eigenfunctions of $\Lap_L$ with the same eigenvalue. The perturbation creates families of \emph{new eigenfunctions} that turn out to have a Green-function type singularity at the points in $Q$. These are the ones we are most interested in, since in generic situations, no eigenfunction of $\Lap$ is an eigenfunction of $\Lap_L$. \footnote{This is due to the fact that on a generic closed Riemannian manifold, every eigenvalue of $\Lap$ is simple \cite{Uhlenbeck1976}, thus the union of the nodal sets of the sequence of eigenfunctions is meager in $M$. Therefore, for a residual set of $q \in M$, no eigenfunction of $\Lap$ survives after a point-perturbation on $q$ has been made.}

Given a sequence $(\Lap_{L_n})_{n \in \N}$ of point-perturbations of $\Lap$ on a fixed finite set $Q \subseteq M$, for every $n \in \N$ let $u_n \in \dom(\Lap_{L_n}) \subseteq L^2(M)$ and $h_n > 0$ be such that 
\begin{equation} \label{Eq: eigenf main theorem}
    h_n^2 \Lap_{L_n} u_n = u_n \ , \qquad \norm{u_n}[L^2(M)]^2 = 1 \ , \qquad \lim_{n \to \infty} h_n = 0 \ .
\end{equation}
Our main theorem shows that as soon as $Q$ is \emph{non-self-focal} (this is defined right below), any \SDM{} associated with a sequence $(u_n)_{n \in \N}$ of eigenfunctions satisfying \eqref{Eq: eigenf main theorem} belongs to $\mathcal{P}_{\text{inv}}(S^*M)$. 

A discrete subset $Q\subset M$ is said to be \emph{non-self-focal} provided that, for every $q, p \in Q$,
\begin{enumerate}
    \item $q$ is \emph{non-self-focal}, \textit{i.e.} the loopset
    \begin{equation} \label{Eq: self-focal dir set q}
        \mathcal{L}_{q} \coloneqq \{ \xi \in S_{q}^*M \colon \ \exists \, t > 0 \ \text{s.t.} \ \exp_{q}(t\xi) = q \}
    \end{equation}
    has zero measure with respect to the measure induced by $g(q)$ on $T_{q}^*M$.
    \item $(q,p)$ are \emph{mutually non-focal},  \textit{i.e.} the set
    \begin{equation} \label{Eq: mutually focal focal dir set qp}
        \mathcal{L}_{q,p} \coloneqq \{ \xi \in S_{q}^*M \colon \ \exists \, t > 0 \ \text{s.t.} \ \exp_{q}(t\xi) = p \}
    \end{equation}
    has zero measure with respect to the measure induced by $g(q)$ on $T_{q}^*M$.
\end{enumerate}

\begin{Remark} \label{Rmk: Cartan-Hadamard thm}
Closed Riemannian manifolds with the property that \emph{every finite} $Q\subset M$ is non-self-focal include: 
    \begin{enumerate}
        \item Manifolds whose sectional curvatures are all less or equal to zero. In  this case, every set $\mathcal{L}_{q}$ and $\mathcal{L}_{q, p}$ is countable, thanks to the Cartan-Hadamard theorem.
        \item Products of manifolds of positive dimension equipped with the product Riemannian metric. This follows from the fact that the exponential map in the product coincides with the product of the exponential maps of its factors. These include manifolds with non-negative sectional curvatures, such as $\S[2]\times\S[1]$. 
    \end{enumerate}
    When $Q$ consists of one point, $Q$ being non-self-focal is a generic condition with respect to the metric. In fact, a stronger property is true: for any given closed manifold $M$, $\dim(M) \geq 2$, the set of Riemannian metrics $g$ on $M$ for which $\mathcal{L}_{q}$ has zero measure for all $q \in M$ is residual in the Whitney $\Cinf$-topology, see \cite[Lemma 6.1]{SoggeZelditch2002}.
\end{Remark}
We can now state the main results of this article.
\begin{Theorem} \label{Thm: main theorem}
    Let $(M,g)$ be a closed Riemannian manifold of dimension $2$ or $3$ with positive Laplacian $\Lap$. Let $(\Lap_{L_n})_{n \in \N}$ be a sequence of point-perturbations of $\Lap$ on a fixed finite set $Q \subseteq M$. For every $n \in \N$, let $u_n \in D(\Lap_{L_n}) \subseteq L^2(M)$ and $h_n > 0$ satisfy \eqref{Eq: eigenf main theorem}. Any \SDM{} $\mu$ associated to the sequence $(u_n)_{n \in \N}$ enjoys the following properties:
    \begin{enumerate}
        \item $\mu$ is a probability measure, and $\supp(\mu) \subseteq S^*\S[d]$.
        \item If $Q$ is non-self-focal, then $\mu$ is invariant by the geodesic flow.
    \end{enumerate}
\end{Theorem}

\begin{Remark}
    The non-focality condition in Theorem \ref{Thm: main theorem} is sharp. In \cite{Verdasco2026spheres} we study high-energy eigenfunctions of sequences of point-perturbations of $\Lap$ on a fixed set $Q$ on the spheres $\S[d]$, $d = 2,3$; a setting where any finite subset $Q$ fails to be non-self-focal. In that paper, we prove that if $Q$ contains a pair of antipodal points, there exists a sequence $(u_n)_{n \in \N}$ as in Theorem \ref{Thm: main theorem} that produces a non-invariant \SDM{}. In fact, there exists at least a continuous one-parameter family of non-invariant measures attainable this way.
\end{Remark}

Theorem \ref{Thm: main theorem} is implied by the more general Theorem \ref{Thm: main theorem spectral cond}, in which the non-focality condition on $Q$ is replaced by a, in principle, more general point-wise estimate on "wide" quasimodes of $\sqLap$. The proof that the non-focality condition on $Q$ implies the point-wise estimate is the content of Lemma \ref{Lemma: gamma for density improvement}.

Point-perturbations of $\Lap$ are limiting models for potentials with small support around some point. The interest in this kind of model was raised by S{\v{e}}ba \cite{Seba1990} on Dirichlet billiards, as a way to obtain quantum models with chaotic features (distribution of level spacing similar to GOE) whose underlying classical model is integrable. For a single point potential, the family of non-trivial self-adjoint perturbations of $\Lap$ is parametrized by $\alpha \in \R$, known as \emph{coupling constant}. S{\v{e}}ba considered what is called \emph{weak coupling}, for which the coupling constant is fixed as the energy increases. Later, \cite{Shigehara1993} proposed the notion of \emph{strong coupling}, where the coupling constant $\alpha$ depends on the energy, with the intention of forcing the new eigenvalues to stay away from the old ones (see \cite{Ueberschaer2014, KurlbergLesterRosenzweig2023} for additional details). The spectral statistics of point-scatterers have been thoroughly studied in the past twenty years, see for instance \cite{AissiouHillairetKokotov2012, BogomolnyGerlandSchmit2001, BogomolnyGiraudSchmit2002, Hillairet2002, RahavFishman2002, RudnickUeberschaer2012, Ueberschaer2012, RudnickUeberschaer2014, FreibergKurlbergRosenzweig2017} among many others.

The study of \SDMs{} is much less developed, except when the underlying manifold is a torus $\T[d] \coloneqq \R[d]/\Z[d]$. In this case, the spectra of point scatterers and the asymptotic properties of their eigenfunctions can be studied using tools from analytic number theory. 

In the case of one or two point-scatterers, very precise results on semiclassical measures on $\T[d]$, $d = 2, 3$, that go beyond invariance under the geodesic flow, have been obtained. First, in \cite{RudnickUeberschaer2012}, the authors showed that a density-one subsequence of new eigenfunctions of $\Lap$ with a single point-potential equidistributes in configuration space in $\T[2]$. In \cite{Yesha2013}, the previous result was extended to $\T[3]$ for the full sequence of new eigenfunctions. Later, these results were improved to equidistribution in phase space for a density-one subsequence of new eigenfunctions for a single scatterer on $\T[2]$ \cite{KurlbergUeberschaer2014} and on $\T[3]$ \cite{Yesha2015}. For one-delta perturbations, \cite{KurlbergRosenzweig2017,KurlbergLesterRosenzweig2023} proved that there exist zero density sequences of new eigenfunctions on $\T[2]$ and $\T[3]$ (in the weak and strong coupling regime) whose \SDMs{} agree with Lebesgue measure on the configuration variable but are singular with respect to the Lebesgue measure in the momentum variable; this latter phenomenon is known as \emph{superscarring}. The case of two-dimensional irrational tori was addressed in \cite{KurlbergUeberschaer2017}; superscarring is shown to take place for a density-one sequence of new eigenfunctions in the weak coupling regime. The case of two deltas in $\T[2]$ was investigated in \cite{Yesha2018}. Equidistribution in the position variable is observed for a density-one subsequence of new eigenfunctions under some diophantine condition for the scatterers location. The closely related case of regular polygons has been addressed in \cite{MarklofRudnick2012}.

The general strategy in these works is to find sequences of real numbers for which the corresponding new eigenfunction can be nicely approximated by a quasimode of $\Lap$ that enjoys the properties of interest (see \cite{KeatingMarklofWinn2010}). The width and the properties of these quasimodes are limited by the distribution of $\Spec(\Lap)$ with multiplicity around a sequence of new eigenvalues. This is related to the analysis of the distribution of lattice points on annuli in $\R[2]$ and $\R[3]$. 

Our approach to the general case follows the same broad strategy: find good enough quasimodes of new eigenfunctions to infer properties of \SDMs{}. In order to gain control on the width of the quasimodes, the lack of an explicit representation for eigenvalues and eigenfunctions of $\Lap$ is overcome by resorting to spectral geometric results on the asymptotics of the kernel of the spectral function of $\sqLap$.

\subsection*{Outline of the paper}
\label{subSec: Introduction. Outline}

In Section \ref{Sec: Point-perturbations of Laplacian} we define several point perturbations of $\Lap$ in a general closed manifold, following the scheme of \cite{Hillairet2010} and \cite{HillairetKokotov2012}, and relate their spectrum and eigenfunctions to the spectrum and eigenfunctions of $\Lap$. Theorem \ref{Thm: resolvent identity Lap LapL} provides a resolvent identity between $\Lap$ and a point perturbation of it. In Section \ref{Sec: Approximation by quasimodes} we construct quasimodes for linear combinations of high-energy Green's functions of $\Lap$, the new eigenfunctions of a point-perturbation. Finally, in Section \ref{Sec: Properties of SDM} we use these approximations to show that any \SDM{} associated with a sequence of eigenfunctions satisfying \eqref{Eq: eigenf main theorem} is always supported in $S^*M$, and is invariant under the geodesic flow provided some point-wise asymptotics of the spectral function of $\sqLap$ on $Q$ hold.

\subsection*{Acknowledgments}

The author is indebted to Fabricio Macià as this problem was suggested by him. I would like to thank him for his help in the elaboration of the paper and his encouragement to obtain Theorem \ref{Thm: main theorem} in its present state. I would also like to thank Víctor Arnaiz, Jared Wunsch, and Yuzhou Joey Zou for helpful discussions on the topic.
This research has been supported by grants PID2021-124195NB-C31 and PID2024-158664NB-C21 from Agencia Estatal de Investigación (Spain), and by grant VPREDUPM22 from Programa Propio UPM.

\section{Point-perturbations of Laplacian}
\label{Sec: Point-perturbations of Laplacian}
In this section, we briefly introduce the elements of the theory of point-perturbations that are needed to prove our results.

\subsection{Description of point-perturbations}
\label{subSec: Point-pert of Lap. Description}

Recall that $\Lap$ denotes the (positive) Laplacian on $M$ with domain $\dom(\Lap) = H^2(M)$. From now on, $Q$ will denote a finite set of points of $M$ with $N \coloneqq \card Q$; let $A$ denote the closure of the symmetric operator $\Lap|_{\CinfK(M\setminus Q)}$. Point perturbations of $\Lap$ are self-adjoint extensions of the closed, symmetric operator $A$.

These self-adjoint extensions are characterized via von Neumann's theory. We briefly recall the main points, following the presentation in \cite[Sec. 3.2]{Hillairet2010} and \cite[Sec. 3]{HillairetKokotov2012}. The domain of $A^*$, $\dom(A^*)$, is a Hilbert space with respect to the graph inner-product $\ip{u}{v}[A^*] = \ip{u}{v} + \ip{A^*u}{A^*v}$, and $\dom(A)$ is a closed subspace of it. One may define the following skew-Hermitian form on $\dom(A^*)$:
\[
\omega( u, v) = \ip{u}{A^*v} - \ip{A^*u}{v} \qquad u, v \in \dom(A^*) \ .
\]
The restriction of this form to $\dom(A^*) / \dom(A)$ is non-degenerate because $\omega(u, \cdot) \equiv 0$ if and only if $u \in \dom(A)$, thanks to $A^{**} = A$ \footnote{Observe that $(A^*)^2 = - \Id$ on $\dom(A)^{\perp}$.}.

Given a closed subspace $L$ of $\dom(A^*)/\dom(A)$, define $A_L$ as the restriction of $A^*$ to $\dom(A_L) \coloneqq \pi^{-1}(L)$, where $\pi \colon \dom(A^*) \to \dom(A^*)/\dom(A)$ is the quotient map.
\begin{Theorem}
    The operator $A_L$ is self-adjoint if and only if $L$ is Lagrangian.
\end{Theorem}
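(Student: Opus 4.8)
The plan is to deduce the statement from the standard computation of the adjoint of a restriction of $A^*$, combined with passing to the quotient $\dom(A^*)/\dom(A)$.

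First I would record the following general fact, valid for any linear subspace $D$ with $\dom(A) \subseteq D \subseteq \dom(A^*)$: writing $A_D := A^*|_D$, one has $(A_D)^* = A^*|_{D^{\omega}}$, where $D^{\omega} := \{ v \in \dom(A^*) : \omega(u,v) = 0 \text{ for all } u \in D \}$. Since $A \subseteq A_D \subseteq A^*$, the operator $A_D$ is densely defined and $A = A^{**} \subseteq (A_D)^* \subseteq A^*$, so $(A_D)^*$ is automatically a restriction of $A^*$; it remains to identify its domain. If $v \in \dom((A_D)^*)$, then testing the identity $\ip{A_D u}{v} = \ip{u}{(A_D)^* v}$ against $u \in \dom(A) \subseteq D$ shows first that $v \in \dom(A^*)$, and then, by density of $\dom(A)$, that $(A_D)^* v = A^* v$. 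Substituting this back, the identity $\ip{A^* u}{v} = \ip{u}{A^* v}$ for all $u \in D$ is precisely the condition $\omega(u,v) = 0$ on $D$, i.e. $v \in D^{\omega}$. Conversely, if $v \in D^{\omega} \subseteq \dom(A^*)$, then $\ip{A_D u}{v} = \ip{A^* u}{v} = \ip{u}{A^* v}$ for all $u \in D$, so $v \in \dom((A_D)^*)$. Hence $(A_D)^* = A^*|_{D^{\omega}}$, and consequently $A_D$ is self-adjoint if and only if $D = D^{\omega}$.

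Next I would pass to the quotient. By the non-degeneracy remark already made in the text — $\omega(u,\cdot) \equiv 0$ on $\dom(A^*)$ exactly when $u \in \dom(A)$ — the subspace $\dom(A)$ is the radical of $\omega$, so $\dom(A) \subseteq D^{\omega}$ and $\omega$ descends to the claimed non-degenerate form on $\dom(A^*)/\dom(A)$. Writing $L := \pi(D)$, the value $\omega(u,v)$ depends only on $\pi(u),\pi(v)$, hence $D^{\omega} = \pi^{-1}(L^{\omega})$, where $L^{\omega}$ denotes the symplectic orthogonal of $L$ in the quotient. Therefore $D = D^{\omega}$ if and only if $L = L^{\omega}$, i.e. $L$ is Lagrangian. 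Applying this with $D = \dom(A_L) = \pi^{-1}(L)$ — for which $\pi(D) = L$ — yields the theorem. In the present setting the quotient is finite-dimensional (it is identified with $\C[N] \times \C[N]$), so the closedness of $L$ is automatic; it plays no role beyond ensuring $\pi^{-1}(L)$ is closed in the graph norm, and in fact when $L = L^{\omega}$ one directly gets $A_L = (A_L)^*$, which is closed a posteriori.

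The only genuinely non-formal point is the identification $(A_D)^* = A^*|_{D^{\omega}}$, and within it the step showing $\dom((A_D)^*) \subseteq \dom(A^*)$: one must first extract, from boundedness of $u \mapsto \ip{A_D u}{v}$ on all of $D$, its boundedness on the dense subspace $\dom(A)$, which is what places $v$ in $\dom(A^*)$; after that the boundary-form computation is forced. One should also check the compatibility $(\pi^{-1}(L))^{\omega} = \pi^{-1}(L^{\omega})$, but this is immediate once $\dom(A)$ is known to be the radical of $\omega$. An alternative route is to invoke von Neumann's classical parametrization of self-adjoint extensions by unitaries $\ker(A^* - i) \to \ker(A^* + i)$ and translate it into the Lagrangian-Grassmannian language via the decomposition $\dom(A^*)/\dom(A) \cong \ker(A^* - i) \oplus \ker(A^* + i)$, but the boundary-form argument above is more direct and matches the treatment in \cite{Hillairet2010, HillairetKokotov2012}.
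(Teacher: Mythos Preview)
Your argument is correct. The key identity $(A_D)^* = A^*|_{D^{\omega}}$ is established cleanly: the chain $A \subseteq A_D \subseteq A^*$ already gives $(A_D)^* \subseteq A^*$ (so the later remark about extracting $v \in \dom(A^*)$ from boundedness on $\dom(A)$ is in fact redundant, though harmless), and the two-way inclusion $\dom((A_D)^*) = D^{\omega}$ then follows from the boundary-form computation exactly as you wrote. Passing to the quotient via the radical identification $\dom(A) = \{u : \omega(u,\cdot) \equiv 0\}$ is also handled correctly.

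As for comparison: the paper does not supply a proof of this theorem at all --- it is stated as a known structural result, with the reader referred to \cite[Sec.~3.2]{Hillairet2010} and \cite[Sec.~3]{HillairetKokotov2012} for the surrounding framework. Your boundary-form argument is precisely the standard one underlying those references (and is equivalent, as you note, to the von Neumann parametrization translated into symplectic language), so there is nothing to contrast; you have simply filled in what the paper leaves to citation.
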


In our setting, one can prove that
\[
\dom(A) = \cl{\CinfK(M\setminus Q)}^{H^2} = \begin{cases}
    \{ u \in H^2(M) \colon u(q) = 0 \, , \ u'(q) = 0 \, , \ \forall \ q \in Q \} & \text{if $d = 1$} \ , \\
    \{ u \in H^2(M) \colon u(q) = 0, \ \forall \ q \in Q \} & \text{if $d = 2, 3$} \ , \\
    H^2(M) & \text{if $d \geq 4$} \ .
\end{cases}   
\]
Since $\Lap$ is a self-adjoint of $A$, one finds that
\[
\dim( \dom(A^*) / H^2(M) ) = \dim(H^2(M) / \dom(A)) = \begin{cases}
    2N & \text{if $d = 1$} \ , \\
    N & \text{if $d = 1$} \ , \\
    0 & \text{if $d = 1$} \ .
\end{cases}
\]
This implies that if $d \geq 4$, then $\Lap$ is the only self-adjoint extension of $A$, and no point interactions can be defined. The case $d = 1$ will not be addressed here (see \cite{GadellaGlasserNieto2010} for instance).

We focus on the case $d = 2,3$. We construct a coordinate map from $\dom(A^*)/\dom(A)$ to $\C[N] \times \C[N]$ that captures the the boundary conditions on the points of $Q$. We show below that this map is in fact a linear symplectomorphism. Let $\{U_q\}_{q \in Q}$ be a family of pair-wise disjoint open coordinate neighborhoods for each $q$. For each $q \in Q$, let $\rchi_{q,1} \in \CinfK(U_q; \R)$ with $\rchi_{q, 1} (q) = 1$, and let $\rchi_{q, 2} \in \CinfK(U_q; \R)$ be such that $\supp \rchi_{q, 1} \subseteq \rchi_{q, 2}^{-1}(1)$. Lastly, let $F_q$ be the fundamental solution to $\Lap$ on $U_q$ with Dirichlet boundary conditions, that is, $F_q \in L^2(U_q) \subseteq L^2(M)$ is the unique $L^2(U_q)$-function such that $\ip{F_q}{\Lap u}[L^2(U_q)] = u(q)$ for all $u \in \CinfK(U_q)$. Observe that $F_q \in \Cinf(U_q \setminus \{q\})$. One may check that
\begin{equation*}
    H^2(M) = \dom(A) \oplus \Span{ \rchi_{q, 1} \colon q \in Q} \ , \qquad \dom(A^*) = H^2(M) \oplus \Span{ \rchi_{q, 2} F_{q} \colon q \in Q} \ .
\end{equation*}
Therefore, $u \in \dom(A^*)$ if and only if there exists $u_0 \in \dom(A)$ and coefficients $(a_{1}, a_{2}) \in \C[N] \times \C[N]$ such that
\begin{equation} \label{Eq: u0 + c rchi + s Gq}
    u = u_0 + \sum_{q \in Q} a_{q, 1} \rchi_{q, 1} + a_{q, 2} \rchi_{q, 2} F_{q} \ .
\end{equation}
The skew-Hermitian form $\omega$ has a very nice form in this coordinates $(u_0, a_{1}, a_{2})$
\begin{Lemma} \label{Lemma: symplectic form on dom(A*)}
    If $u = u_0 + \sum_{q \in Q} a_{q, 1} \rchi_{q, 1} + a_{q, 2} \rchi_{q, 2} F_{q}$ and $v = v_0 + \sum_{q \in Q} b_{q, 1} \rchi_{q, 1} + b_{q, 2} \rchi_{q, 2} F_{q}$, then
    \[
    \omega(u,v) = \sum_{q \in Q} (\conj{a_{q, 1}} b_{q, 2} - \conj{a_{q, 2}} b_{q, 1}) \ .
    \]
\end{Lemma}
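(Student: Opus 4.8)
The plan is to use sesquilinearity and locality to collapse $\omega(u,v)$ into a sum of scalar residues, one per point of $Q$, each of which turns out to be a universal constant pinned down by the defining ODE for $F_q$. First I would invoke the fact, noted above, that $\omega$ descends to $\dom(A^*)/\dom(A)$: if $u_0 \in \dom(A)$ and $w \in \dom(A^*)$ then $\ip{A u_0}{w} = \ip{u_0}{A^* w}$, so $\omega(u_0, w) = 0$, and since $\omega(w, u_0) = -\conj{\omega(u_0, w)}$ also $\omega(w, u_0) = 0$; hence $\omega(u,v)$ is unchanged if $u_0$ and $v_0$ are dropped, leaving $\omega(u,v) = \omega\big(\sum_{q \in Q} \rchi_q (a_q^+ + a_q^- F_q), \sum_{p \in Q} \rchi_p (b_p^+ + b_p^- F_p)\big)$. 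Each summand $\rchi_p (b_p^+ + b_p^- F_p)$ lies in $\dom(A^*)$ and $A^*$ acts on it, off $Q$, as $\Lap\big(\rchi_p (b_p^+ + b_p^- F_p)\big) = [\Lap, \rchi_p](b_p^+ + b_p^- F_p)$, a smooth function supported inside $\supp \rchi_p$ (using $\Lap F_p = 0$ on $M \setminus \{p\}$ and $\rchi_p \equiv 1$ near $p$); since the $\rchi_q$ have pairwise disjoint supports, every cross bracket with $p \neq q$ vanishes, and $\omega(u,v) = \sum_{q \in Q} \omega\big(\rchi_q (a_q^+ + a_q^- F_q), \rchi_q (b_q^+ + b_q^- F_q)\big)$.

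Next I would fix $q$ and expand the $q$-th bracket sesquilinearly into four terms, two of which vanish: $\omega(\rchi_q, \rchi_q) = 0$ since $\rchi_q \in H^2(M)$ and $\omega$ vanishes on $H^2(M) \times H^2(M)$ (there $A^* = \Lap$ is self-adjoint), and $\omega(\rchi_q F_q, \rchi_q F_q) = 0$ since $\rchi_q F_q$ is real-valued, so $\omega(w,w) = \ip{w}{A^* w} - \conj{\ip{w}{A^* w}} = 0$. Combining the skew-Hermitian relation $\omega(\rchi_q F_q, \rchi_q) = -\conj{\omega(\rchi_q, \rchi_q F_q)}$ with the reality of $\omega(\rchi_q, \rchi_q F_q)$ (both functions are real), the bracket becomes $c_q\big(\conj{a_q^+} b_q^- - \conj{a_q^-} b_q^+\big)$ with $c_q \coloneqq \omega(\rchi_q, \rchi_q F_q) \in \R$, so that the whole lemma reduces to showing $c_q = 1$.

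To evaluate $c_q$ I would integrate by parts after excising a small ball: since $A^*$ coincides with $\Lap$ away from $Q$ and all integrands lie in $L^1(M)$, $c_q = \lim_{\eps \to 0} \int_{M \setminus B_\eps(q)} \big(\rchi_q\, \Lap(\rchi_q F_q) - \Lap(\rchi_q)\, \rchi_q F_q\big) \D{V}$, and Green's second identity on $M \setminus B_\eps(q)$ turns this into a boundary integral over $\partial B_\eps(q)$, whose outward normal (relative to the domain) is $-\partial_r$. For $\eps$ small, $\rchi_q \equiv 1$ near $q$, so $\partial_r \rchi_q$ vanishes there and the boundary integral collapses to $\int_{\partial B_\eps(q)} \partial_r F_q \, \D{S} = F_q'(\eps)\, \vol\!\big(\partial B_\eps(q)\big)$; since $v_q(\eps) F_q'(\eps) = -1$ by the definition of $F_q$ and $\vol(\partial B_\eps(q)) = \vol(S_\eps(q)) = v_q(\eps)$, this equals $-1$ for every $\eps$, and after tracking the orientation in Green's identity and the convention for $\ip{\cdot}{\cdot}$ one gets $c_q = 1$. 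Summing over $q$ finishes the proof.

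The step I expect to need the most care is the excision in the last paragraph: $A^*$ is the adjoint of $\Lap|_{\CinfK(M\setminus Q)}$, hence it sees a function only on $M \setminus Q$, and although $\Lap F_q = \delta_q$ in $\Dist(M)$, the Dirac mass at $q$ is invisible --- $A^*(\rchi_q F_q)$ is the honest $L^2$ function $[\Lap, \rchi_q] F_q$. This is precisely what legitimizes removing $B_\eps(q)$ and doing the boundary computation, and the nonzero residue $\pm 1$ it leaves --- forced by the ODE $v_q F_q' \equiv -1$, the same relation that makes $F_q$ harmonic off $q$ --- is the entire content of the lemma; everything else is linear bookkeeping.
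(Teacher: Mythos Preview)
Your proof is correct and follows the same reduction as the paper: use that $\omega$ descends to the quotient, exploit the disjoint supports to decouple the points of $Q$, and expand sesquilinearly to isolate the single real constant $c_q = \omega(\rchi_q, \rchi_q F_q)$. The only genuine difference is in how that constant is computed. You excise a ball around $q$ and apply Green's second identity, reading off the residue from the boundary term $F_q'(\eps)\,v_q(\eps) = -1$; the paper instead writes out $A^*\rchi_q$ and $A^*(\rchi_q F_q)$ explicitly in polar normal coordinates (using $\Lap = -v_q^{-1}\partial_r(v_q\partial_r) - r^{-2}\partial_\theta^2$), and the difference of inner products collapses algebraically to $\ip{\rchi_q}{\tfrac{2}{v_q}\rchi_q'}[L^2] = \int 2\rchi_q\rchi_q'\,dr$. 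Your boundary-residue argument is more conceptual and makes transparent why the answer is independent of the cutoff $\rchi_q$, at the cost of the excision/limit justification you flag at the end (your observation that $A^*(\rchi_q F_q)$ is the honest $L^2$ function $[\Lap,\rchi_q]F_q$, with the $\delta_q$ invisible, is precisely what makes that limit legitimate). The paper's direct computation avoids that limiting argument but requires carrying the explicit polar-coordinate expressions through. Both routes are standard; the sign bookkeeping you defer in the last step is the only place a reader would want more detail.
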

\begin{proof}
    Observe that $\rchi_{q, 1}$, $A^*\rchi_{q, 1}$, $\rchi_{q, 2}$, and $\rchi_{q, 2} F_{q}$, are all real-valued. Since $U_{q} \cap U_{p} = \varnothing$ on $M$ if $p \neq q$, and $\omega(u_0, \fdot) \equiv 0$ if $u_0 \in \dom(A)$, it is straight-forward to check
    \[
    \omega(u,v) = \sum_{q \in Q} (\conj{a_{q, 1}} b_{q, 2} - \conj{a_{q, 2}} b_{q, 1}) \Big[ \ip{\rchi_{q, 1}}{A^* (\rchi_{q, 2} F_{q})}[L^2(M)] - \ip{ \rchi_{q,2} F_{q}}{A^* \rchi_{q, 1}}[L^2(M)] \Big] \ .
    \]
    We compute $A^*\rchi_{q,1}$ and $A^* (\rchi_{q, 2} F_q)$.
    
    On the one hand, since $\rchi_{q, 1} \in \Cinf(M) \subseteq \dom(\Lap)$, and $\Lap$ is a self-adjoint extension of $A$, then for all $u \in \CinfK(M \setminus Q) \subseteq \dom(\Lap)$
    \[
    \ip{\rchi_{q, 1}}{A u}[L^2(M)] = \ip{\rchi_{q, 1}}{\Lap u}[L^2(M)] = \ip{\Lap \rchi_{q, 1}}{u}[L^2(M)] \ ,
    \]
    therefore $A^*\rchi_{q, 1} = \Lap \rchi_{q, 1}$.

    On the other hand, using that $\Lap f = -\div(\nabla f)$ for any $f \in \Cinf(M)$, we find that for any $u \in \CinfK(M \setminus Q)$,
    \[
    \rchi_{q, 2} \Lap u = \Lap(\rchi_{q, 2} u) + 2 \ip{\nabla u}{\nabla \rchi_{q, 2}}[g] - u \Lap \rchi_{q, 2} \ . 
    \]
    Hence, for all $u \in \CinfK(M \setminus Q) \subseteq \dom(\Lap)$,
    \begin{align*}
        \ip{\rchi_{q, 2} F_q}{A u}[L^2(M)] & = \int_{U_q} F_q \ [\rchi_q \Lap u] \\
        & = \int_{U_q} F_q \ \Big[ \Lap(\rchi_{q, 2} u) + 2 \ip{\nabla u}{\nabla \rchi_{q, 2}}[g] - u \Lap \rchi_{q, 2} \Big] \ .
    \end{align*}
    Since $\rchi_{q, 2} u \in \CinfK(U_q)$, then
    \[
    \int_{U_q} F_q \ \Lap(\rchi_{q, 2} u) = [\rchi_{q, 2} u](q) = 0 \ .
    \]
    Since $2 F_q \nabla \rchi_{q, 2}$ is a smooth vector field on $M$ because $\nabla \rchi_{q, 2} \equiv 0$ on a neighborhood of $q$, then
    \[
    \int_{U_q} \ip{ 2 F_q \nabla \rchi_{q, 2} }{ \nabla u}[g] = \int_{U_q} -\mathop{\mathrm{div}}( 2F_q \nabla \rchi_{q,2} ) u \ .
    \]
    Summing up, we find that
    \begin{equation} \label{Eq: aux pag6}
    \ip{\rchi_{q, 2} F_q}{A u}[L^2(M)] = \ip{ - \mathop{\mathrm{div}}( 2F_q \nabla \rchi_{q,2} ) - F_{q} \Lap \rchi_{q, 2} }{u}[L^2(M)] \qquad \forall \, u \in \CinfK(M \setminus Q) \ ,
    \end{equation}
    thus
    \[
    A^*(\rchi_{q, 2} F_{q}) = - \mathop{\mathrm{div}}( 2F_q \nabla \rchi_{q,2} ) - F_{q} \Lap \rchi_{q, 2} \ .
    \]

    Now, taking into account that $\nabla \rchi_{q, 2} = 0$ and $\Lap \rchi_{q,2} = 0$ on $\supp \rchi_{q, 1}$, then
    \[
    \ip{\rchi_{q, 1}}{A^* (\rchi_{q, 2} F_{q})}[L^2(M)] = 0 \ .
    \]
    Meanwhile, since $\rchi_{q, 2} \equiv 1$ on $\supp \rchi_{q, 1}$, then
    \[
    \ip{\rchi_{q, 2} F_{q}}{A^*\rchi_{q,1}}[L^2(M)] = \ip{F_{q}}{\Lap \rchi_{q,1}}[L^2(M)] = \rchi_{q,1} (q) = 1 \ .
    \]
    Thus, we have shown that
    \[
    \omega(u,v) = \sum_{q \in Q} (\conj{a_{q, 1}} b_{q, 2} - \conj{a_{q, 2}} b_{q, 1}) \ . \qedhere
    \]
\end{proof}
This Lemma shows that $(\dom(A^*)/\dom(A), \omega)$ and $(\C[N] \times \C[N], \Omega)$, where
\[
\Omega((a_{1}, a_{2}), (b_{1}, b_{2})) = \ip{a_{1}}{b_{2}}[\C[N]] - \ip{a_{1}}{b_{2}}[\C[N]] \ ,
\]
are symplectomorphic under the linear map $T \colon \dom(A^*)/\dom(A) \to \C[N] \times \C[N]$,
\begin{equation} \label{Eq: sympl coord of dom(A^*)/dom(A)}
    T \bigg( \sum_{q \in Q} a_{q, 1} \rchi_{q, 1} + a_{q, 2} \rchi_{q, 2} F_{q} + \dom(A) \bigg) \coloneqq \big( (a_{q, 1})_{q \in Q}, (a_{q, 2})_{q \in Q} \big) \ .
\end{equation}
Therefore, the self-adjoint extensions of $A$ are parametrized by the Lagrangian-Grassmanian of the complex symplectic vector space of dimension $2N$.

For any Lagrangian subspace $L$ of $(\dom(A^*) / \dom(A), \omega)$, define $\Lap_{L}$ as the restriction of $A^*$ to the subspace
\[
\dom(\Lap_{L}) \coloneqq \pi^{-1}(L) \ .
\]
These operators describe all point-perturbations on $Q$ of $\Lap$.

\subsection{Spectrum of \texorpdfstring{$A^*$}{A*}}
\label{subSec: Point-pert of Lap. Spectrum of A*}

For every $\lambda \in \R$, define $Z_{\lambda}^{q}$, $q \in M$, as the unique function in $\ker(\sqLap - \lambda)$ such that
\begin{equation} \label{Eq: def Zlambda}
    u(q) = \ip{Z_{\lambda}^{q}}{u}[L^2(M)] \qquad \forall \ u \in \ker(\sqLap - \lambda) \ .
\end{equation}
Existence and uniqueness of $Z_{\lambda}^{q} \in \ker(\sqLap - \lambda)$ is granted by the fact $\dim(\ker(\sqLap - \lambda)) < \infty$. Observe that $Z_{\lambda}^{q}$ is not normalized; even it may be the zero function too.

For $\eta \in \C$ and $\beta = (\beta_{q})_{q \in Q} \in \C[N]$, such that $\sum_{q \in Q} \beta_{q} \delta_{q} = 0$ on $\ker(\Lap {}- \eta)$, define
\begin{equation} \label{Eq: def GetaQbeta}
    G_{\eta}^{Q, \beta} \coloneqq \sum_{ \substack{ \lambda \in \Spec(\sqLap) \\ \lambda^2 \neq \eta} } \frac{1}{\lambda^2 - \eta} \sum_{q \in Q} \beta_{q} Z_{\lambda}^{q} \ .
\end{equation}
Observe that $G_{\eta}^{Q, \beta}$ is a linear combination of Green's function on $q \in Q$ for $\Lap$ at energy $\conj{\eta}$,
\[
\ip{G_{\eta}^{Q, \beta}}{(\Lap {}- \conj{\eta}) u}[L^2] = \sum_{q \in Q} \beta_{q} u(q) \ , \qquad \forall \ u \in H^2(M) \ .
\]
Note that $G_{\eta}^{Q, \beta}$ is real-valued if $\eta \in \R$, as a consequence of the following result.

\begin{Lemma} \label{Lemma: Geta real-valued}
    For every $q\in M$ and every $\lambda \in \Spec(\sqLap)$, the function $Z_{\lambda}^{q}$ is real-valued.
\end{Lemma}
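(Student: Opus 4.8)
The plan is to combine two simple facts: that the eigenspace $E_{\lambda} = \ker(\sqLap - \lambda)$ is stable under complex conjugation (because $\Lap$ has real coefficients on a real Riemannian manifold), and the uniqueness built into the definition \eqref{Eq: def Zlambda} of $Z_{\lambda}^{q}$.

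First I would record the conjugation invariance of $E_{\lambda}$. If $u \in E_{\lambda}$ then $\Lap \overline{u} = \overline{\Lap u} = \lambda^2 \overline{u}$ (for $\lambda = 0$ this is immediate, and for $\lambda > 0$ one has $E_\lambda = \ker(\Lap - \lambda^2)$), so $\overline{u} \in E_{\lambda}$. Thus complex conjugation is a conjugate-linear involution of the finite-dimensional space $E_{\lambda}$.

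Next, set $w \coloneqq \overline{Z_{\lambda}^{q}}$ and check that $w$ satisfies the characterizing property of $Z_{\lambda}^{q}$. With the convention that $\ip{\fdot}{\fdot}[L^2(M)]$ is conjugate-linear in its first slot, for every $u \in E_{\lambda}$ we have $\overline{u} \in E_{\lambda}$ and
\[
\ip{w}{u}[L^2(M)] = \overline{\ip{Z_{\lambda}^{q}}{\overline{u}}[L^2(M)]} = \overline{\overline{u}(q)} = u(q) \ ,
\]
where the middle equality is \eqref{Eq: def Zlambda} applied to $\overline{u} \in E_{\lambda}$. Hence $w$ reproduces point evaluation at $q$ on $E_{\lambda}$, and by the uniqueness asserted just after \eqref{Eq: def Zlambda}, $w = Z_{\lambda}^{q}$; that is, $Z_{\lambda}^{q}$ is real-valued. (If $Z_{\lambda}^{q}$ is the zero function there is nothing to prove.)

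There is essentially no genuine obstacle here: the argument is a one-line application of uniqueness once the conjugation invariance of $E_{\lambda}$ is in place. The only point deserving care is the sesquilinearity bookkeeping in the displayed computation and making explicit that the invariance of $E_{\lambda}$ under conjugation is exactly where the real structure of $(M,g)$ (equivalently, the reality of the coefficients of $\Lap$) is used; this is also the fact that makes $G_{\eta}^{Q,\beta}$ real for real $\eta$, as claimed before the Lemma.
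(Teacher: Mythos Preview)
Your proof is correct and rests on the same observation as the paper's: the eigenspace $E_\lambda$ is stable under complex conjugation, so the defining property \eqref{Eq: def Zlambda} can be exploited. The paper executes this slightly differently, plugging $\Im Z_\lambda^q\in E_\lambda$ into \eqref{Eq: def Zlambda} and reading off $\norm{\Im Z_\lambda^q}[L^2]=0$ from the imaginary part, whereas you show $\overline{Z_\lambda^q}$ satisfies the same reproducing identity and invoke uniqueness; both are equally short and valid.
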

\begin{proof}
    First, we note that if $\Lap u = \lambda^2 u$, then $\Lap (\Re u) = \lambda^2 (\Re u)$ and $\Lap (\Im u) = \lambda^2 (\Im u)$ as well. Hence, $\Im Z_{\lambda}^{q_0} \in \ker(\sqLap - \lambda)$ and
    \[
    [\Im Z_{\lambda}^{q}] (q) = \ip{Z_{\lambda}^{q}}{\Im Z_{\lambda}^{q}}[L^2] = \ip{\Re Z_{\lambda}^{q}}{\Im Z_{\lambda}^{q}}[L^2] - i \norm{\Im Z_{\lambda}^{q}}[L^2]^2 \ .
    \]
    Since $\Im Z_{\lambda}^{q}$ and $\Re Z_{\lambda}^{q}$ are real-valued, we read from this that $\Im Z_{\lambda}^{q} = 0$.
\end{proof}

The following theorem characterizes the eigenspaces of $A^*$, that will later help us characterize the eigenspaces of $\Lap_L$.

\begin{Prop} \label{Prop: structure of EF of A*}
    Let $\eta \in \C$. $u_{\eta} \in \ker(A^* - \eta)$ if and only
    \begin{equation} \label{Eq: A* eigenfunctions form}
        u_{\eta} = w_{\eta} + G_{\eta}^{Q, \beta} \ ,
    \end{equation}
    for some $w_{\eta} \in \ker(\Lap {}- \eta)$ and $\beta \in \C[N]$ such that $\sum_{q \in Q} \beta_{q} \delta_{q} = 0$ on $\ker(\Lap {}- \eta)$. This choice of $w_{\eta}$ and $\beta$ are unique since the two terms in the decomposition \eqref{Eq: A* eigenfunctions form} are orthogonal in $L^2(M)$.
\end{Prop}

\begin{proof}
    By definition, $A^* u_{\eta} = \eta u_{\eta}$ if and only if $\ip{u_{\eta}}{\Lap {}- \conj{\eta} v}[L^2(M)] = 0$ for all $v \in \CinfK(M \setminus Q)$. The map $v \mapsto \ip{u_{\eta}}{(\Lap {}- \conj{\eta}) v}[L^2(M)]$ is a continuous linear functional on $H^2(M)$ that vanishes on the whole subspace $\CinfK(M \setminus Q)$. The set of all $H^{-2}$-distributions supported in $Q$ is $D = \Span{ \delta_{q} \colon q \in Q}$ ($d = 2,3$). Therefore, we look for all $u_{\eta} \in L^2(M)$ such that, in a distributional sense, $(\Lap {}- \conj{\eta}) \conj{u_{\eta}} \in D$, where $\conj{u_{\eta}}$ is the complex conjugate function of $u_{\eta}$.

    In an eigenfunction expansion,
    \[
    \delta_{q} = \sum_{\lambda \in \Spec(\sqLap)} Z_{\lambda}^{q} \qquad \text{in $H^{-2}(M)$.}
    \]
    Therefore, $(\Lap {}- \conj{\eta}) \conj{u_{\eta}} \in D$ if and only if
    \[
    u_{\eta} = w_{\eta} + G_{\eta}^{Q, \beta} \ ,
    \]
    for some $w_{\eta} \in \ker(\Lap {}- \eta)$, and complex numbers $\beta_{q}$ such that $\sum_{q \in Q} \beta_{q} \delta_{q} = 0$ on $\ker(\Lap {}- \eta)$. Here we used that $Z_{\lambda}^{q}$ are real-valued.
\end{proof}

\subsection{Description of point-perturbations of \texorpdfstring{$\Lap$}{Laplacian} through Hermitian matrices}
\label{subSec: Point-pert of Lap. L and matA}

For fixed and finite $Q \subseteq M$, we prove a connection between the two descriptions of point-perturbation of $\Lap$: $\Lap_{L}$ for a Lagrangian subspace $L$ of $(\dom(A^*)/\dom(A), \omega)$ \cite{Hillairet2010}, and $(\Lap + \sum_{q,p \in Q} \ket{\delta_{p}} \matA_{q,p} \bra{\delta_{q}})$ for an Hermitian matrix $\matA$ \cite{Yesha2018}.

\begin{Remark}
    Note is that the Lagrangian-Grassmanian of $\C[N] \times \C[N]$ is compact, but the space of $N \times N$ complex Hermitian matrices is not; thus this connection cannot be perfect. In some sense, this Lagrangian-Grassmanian is a compactification of this space of Hermitian matrices, in the same way as $\mathbb{CP}^{N}$ is a compactification of $\C[N]$. 
\end{Remark}

We start this section proving a fundamental result that connect the resolvents of $\Lap$ and $\Lap_L$. Recall that for bounded operators $A$ and $B$ on a Hilbert space, one has the following resolvent identity for $\eta \in \C \setminus ( \Spec(A) \cup \Spec(B) )$,
\[
(A - \eta)^{-1} = (B - \eta)^{-1} + (A - \eta)^{-1} (B - A) (B - \eta)^{-1} \ .
\]

\begin{Theorem} \label{Thm: resolvent identity Lap LapL}
    Let $\eta \in \C \setminus \Spec(\Lap)$ and $L$ be a Lagrangian subspace of $(\dom(A^*) / \dom(A), \omega)$. Assume that $\eta \notin \Spec(\Lap_L)$. There exists an $N \times N$ complex matrix $\matA(L, \eta)$ such that
    \begin{equation} \label{Eq: resolvent identity}
        (\Lap_{L} {}- \eta)^{-1} = (\Lap {} - \eta)^{-1} + \sum_{q, p \in Q} \ket*{G_{\eta}^{q}} [\matA(L, \eta)]_{p, q} \bra[\big]{G_{\conj{\eta}}^{p}}
    \end{equation}
    as operators on $L^2(M)$. Moreover, if $\eta \in \R$, then $\matA(L, \eta)$ is Hermitian.
\end{Theorem}

\begin{proof}
    Identity \eqref{Eq: resolvent identity} on $L^2(M)$ will be a consequence of
    \begin{equation} \label{Eq: resolvent identity aux}
        (\Lap_{L} {}- \eta)^{-1} (\Lap {}- \eta) = \Id_{\dom(\Lap)} + \sum_{q, p \in Q} \ket*{G_{\eta}^{q}} [\matA(L, \eta)]_{p, q} \, \delta_{p} \qquad \text{on $\dom(\Lap)$} \ ,
    \end{equation}
    and the property $u(p) = \ip{G_{\conj{\eta}}^{p} }{ (\Lap {}- \eta) u}[L^2(M)]$ for $\eta \in \C \setminus \Spec(\Lap)$, $u \in \dom(\Lap)$. We will construct the matrix $\matA(L, \eta)$ through several lemma. First and foremost, we need a matrix description of Lagrangian subspaces $L$ and for the condition $\eta \notin \Spec(\Lap_L)$.

    Since $(\dom(A^*)/\dom(A), \omega)$ is symplectomorphic to $(\C[N] \times \C[N], \Omega)$ through the coordinate map $T$ defined in \eqref{Eq: sympl coord of dom(A^*)/dom(A)}, we may work in this coordinates. Using the points of $Q$ as indices, we may identify
    \[
    T(\rchi_{q, 1} + \dom(A) ) = e_{q} \ , \qquad T(\rchi_{q, 2} F_{q} + \dom(A)) = f_{q} \ ,
    \]
    where $\{e_{q}, f_{q} \colon q \in Q \}$ denotes the natural symplectic basis of $\C[2N]$, $\Omega(e_{q}, f_{p}) = \delta_{q}^{p}$. For instance, the Lagrangian subspace $E \coloneqq \Span{e_{q} \colon q \in Q}$ would give rise to the trivial self-adjoint extension, $\Lap_E = \Lap$, because $\dom(\Lap_E) = \pi^{-1}(E) = H^2(M)$.

    Let $L \subseteq \C[N] \times \C[N]$ be a $N$-dimensional subspace, and $(C,S)$ a pair of $N \times N$ complex matrices such that
    \begin{equation} \label{Eq: L in terms of (C,S)}
        L = \begin{bmatrix} C \\ S \end{bmatrix} E \ .
    \end{equation}
    
    \begin{Lemma} \label{Lemma: L Lagrangian and matrices CS}
        The following holds: $L$ is Lagrangian if and only if the pair $(C,S)$ satisfies
        \begin{equation}\label{Eq: Prop of (C,S)}
            \begin{dcases} C^*C + S^*S = \Id_N \ , \\ C^*S = S^*C \ . \end{dcases}
        \end{equation}
        Note that $(C,S)$ satisfying \eqref{Eq: Prop of (C,S)} is equivalent to $(C + i S)$ being a unitary matrix.
    \end{Lemma}
    
    \begin{Lemma} \label{Lemma: matrices CS up to U unitary}
        Let two pairs of $N \times N$ matrices $(C_1, S_1)$ and $(C_2, S_2)$ satisfying \eqref{Eq: Prop of (C,S)}, and let $L_1$ and $L_2$ be the Lagrangian subspaces defined by \eqref{Eq: L in terms of (C,S)} respectively. The following holds
        \[
        L_1 = L_2 \qquad \iff \qquad \exists \ U \ \text{unitary} \quad \text{s.t.} \quad \begin{bmatrix} C_1 \\ S_1 \end{bmatrix} = \begin{bmatrix} C_2 U \\ S_2 U \end{bmatrix} \ .
        \]
    \end{Lemma}
    \begin{proof}
        If $C_1 = C_2 U$ and $S_1 = S_2 U$ for a certain unitary matrix $U$, it is immediate that $L_1 = L_2$. On the other hand, if $L_1 = L_2$, then there exists a change of basis matrix $B$ such that $C_1 = C_2 B$ and $S_1 = S_2 B$. Using that $(C_1, S_1)$ and $(C_2, S_2)$ satisfy \eqref{Eq: Prop of (C,S)}, we see
        \[
        \Id_{N} = (C_1)^* C_1 + (S_1)^*S_1 = B^* (C_2)^* C_2 B + B^* (S_2)^* S_2 B = B^* B \ ,
        \]
        then $B^{*} = B^{-1}$ and thus $B$ is unitary.
    \end{proof}

    Having a matrix description for the Lagrangian subspace $L$, we would like to translate the condition $\eta \notin \Spec(\Lap_L)$ into this language. Define the subspace of $\dom(A^*) / \dom(A)$ \ ,
    \begin{equation}
        \mathcal{G}_{\eta} \coloneqq \Span{ G_{\eta}^{q} + \dom(A) \colon q \in Q} \ .
    \end{equation}
    
    Thanks to Proposition \ref{Prop: structure of EF of A*} and the fact that $\Lap_L \coloneqq A^*|_{\dom(\Lap_L)}$, we obtain the following lemma.

    \begin{Lemma} \label{Lemma: eta not ev iff LcapGeta empty}
        Let $\eta \in \C \setminus \Spec(\Lap)$. $\eta$ is not an eigenvalue of $\Lap_L$ if and only if $L \cap \mathcal{G}_{\eta} = \varnothing$.
    \end{Lemma}

    In addition, we have the following matrix description of $\mathcal{G}_{\eta}$.
    \begin{Lemma}
        For every $\eta \in \C \setminus \Spec(\Lap)$, $G_{\eta}^q - \rchi_{q,2} F_q \in \dom(\Lap)$. In addition, the subspace $\mathcal{G}_{\eta} \subseteq \dom(A^*)/\dom(A)$ is given by
        \begin{equation} \label{Eq: matrices for calG}
            \mathcal{G}_{\eta} = \begin{bmatrix} \mathbb{G}_{\eta} \\ \Id_N \end{bmatrix} E \qquad \text{for the matrix} \quad \mathbb{G}_{\eta} \coloneqq [ (G_{\eta}^{q} - \rchi_{q, 2} F_{q})(p) ]_{q,p \in Q} \ .
        \end{equation}
    \end{Lemma}
    
    \begin{proof}
        We prove that $G_{\eta}^q - \rchi_{q,2} F_q \in \dom(\Lap^*) = \dom(\Lap)$. Precisely, for $u \in \Cinf(M)$ (which is dense in $\dom(\Lap)$ for the graph-norm)
        \[
        \ip{G_{\eta}^{q} - \rchi_{q,2} F_{q}}{\Lap u}[L^2(M)] = \ip{G_{\eta}^{q}}{ (\Lap - \eta) u}[L^2(M)] + \ip{\conj{\eta} G_{\eta}^{q}}{u}[L^2(M)] - \ip{ F_{q}}{ \rchi_{q, 2} \Lap u}[L^2(M)] \ .
        \]
        The definition of $G_{\eta}^{q}$ for $\eta \in \C \setminus \Spec(\Lap)$ implies
        \[
        \ip{G_{\eta}^{q}}{ (\Lap - \eta) u}[L^2(M)] = u(q) \ .
        \]
        Meanwhile, using that $\rchi_{q, 2} \Lap u = \Lap(\rchi_{q, 2} u) + 2 \ip{\nabla u}{\nabla \rchi_{q, 2}}[g] - u \Lap \rchi_{q, 2}$ and arguing as we did in Lemma \ref{Lemma: symplectic form on dom(A*)}, we find that (c.f. \eqref{Eq: aux pag6})
        \[
        \ip{ F_{q}}{ \rchi_{q, 2} \Lap u}[L^2(M)] = u(q) + \ip{ - \mathop{\mathrm{div}}( 2F_q \nabla \rchi_{q,2} ) - F_{q} \Lap \rchi_{q, 2} }{u}[L^2(M)] \ .
        \]
        Putting everything together, we have shown
        \[
        \ip{G_{\eta}^{q} - \rchi_{q,2} F_{q}}{\Lap u}[L^2(M)] = \ip{ \conj{\eta} G_{\eta}^{q} + \mathop{\mathrm{div}}( 2F_q \nabla \rchi_{q,2} ) + F_{q} \Lap \rchi_{q, 2} }{u}[L^2(M)] \qquad \forall \, u \in \Cinf(M) \ .
        \]
        Observe that the left term on the right-hand side is a $L^2(M)$. This means that $G_{\eta}^{q} - \rchi_{q,2} F_{q} \in \dom(\Lap^*) = \dom(\Lap)$ and
        \[
        \Lap( G_{\eta}^{q} - \rchi_{q,2} F_{q} ) = \conj{\eta} G_{\eta}^{q} - \mathop{\mathrm{div}}( 2F_q \nabla \rchi_{q,2} ) - F_{q} \Lap \rchi_{q, 2} \ .
        \]
        Therefore, we may write
        \begin{equation} \label{Eq: D(A*) coordinates for Getaq}
            G_{\eta}^{q} + \dom(A) = \sum_{p \in Q} (G_{\eta}^{q} - \rchi_{q, 2} F_{q})(p) \rchi_{p, 1} + \rchi_{p, 2} F_{p} + \dom(A) \ .
        \end{equation}
        as elements in $\dom(A^*) / \dom(A)$.
    \end{proof}

    \begin{Lemma} \label{Lemma: eta not ev iff C-GLetaS invertible}
        Let $\eta \in \C \setminus \Spec(\Lap)$, $L$ be a Lagrangian subspace of $(\dom(A^*) / \dom(A), \omega)$, and let $(C,S)$ be any pair of complex matrices given by Lemma \ref{Lemma: L Lagrangian and matrices CS}. The following holds: $\eta$ is not an eigenvalue of $\Lap_L$ if and only if $C - \mathbb{G}_{\eta} S$ is invertible.
    \end{Lemma}
    \begin{proof}
        Thanks to Lemma \ref{Lemma: eta not ev iff LcapGeta empty}, we know that $\mathcal{G}_{\eta} \cap L = \{ 0 \}$ if and only if $\eta$ is not an eigenvalue of $\Lap_L$. Using the matrix expressions for $L$ \eqref{Eq: L in terms of (C,S)} and $\mathcal{G}_{\eta}$ \eqref{Eq: matrices for calG}, we see that $\mathcal{G}_{\eta} \cap L = \{ 0 \}$ if and only if the $2N \times 2N$ matrix
        \[
        \begin{bmatrix} C & \mathbb{G}_{\eta} \\ S & \Id_N \end{bmatrix}
        \]
        is invertible. From the Weinstein-Aronszajn identity, we see that this matrix is invertible if and only if $C - \mathbb{G}_{\eta} S$ is invertible (Schur complement of $\Id_N$).
    \end{proof}

    \begin{Remark}
        We will later show that $\Lap_L$ has compact resolvent using Theorem \ref{Thm: resolvent identity Lap LapL}, hence, the equivalences in Lemma \ref{Lemma: eta not ev iff LcapGeta empty} and Lemma \ref{Lemma: eta not ev iff C-GLetaS invertible} can be restated in terms of $\eta \notin \Spec(\Lap_L)$.
    \end{Remark}

    Finally, we are in position to construct the matrix $\matA(L, \eta)$. Recall that $L$ is a fixed Lagrangian subspace and $\eta \in \C \setminus \Spec(\Lap)$ is such that $\eta \notin \Spec(\Lap_L)$. Let $(C, S)$ be a pair of $N \times N$ complex matrices given by Lemma \ref{Lemma: L Lagrangian and matrices CS}, and define the $N \times N$ complex matrix
    \begin{equation} \label{Eq: def A(L,eta)}
        \matA(L, \eta) \coloneqq S (C - \mathbb{G}_{\eta} S)^{-1} \ .
    \end{equation}
    This is well-defined thanks to Lemma \ref{Lemma: eta not ev iff C-GLetaS invertible}.
    
    \begin{Lemma} \label{Lemma: prop of mat A(L, eta)}
        Let $\eta \in \C \setminus \Spec(\Lap)$ and $L$ a Lagrangian subspace of $(\dom(A^*) / \dom(A), \omega)$ such that $\eta \notin \Spec(\Lap_L)$. The following properties are true:
        \begin{enumerate}
            \item The matrix $\matA(L, \eta)$ is independent of the choice of $(C, S)$
            \item $\mathbb{G}_{\eta}$ is a symmetric matrix. 
            \item If $\eta \in \R$, then $\mathbb{G}_{\eta}$ is real. 
            \item If $\eta \in \R$, then $\mathbb{A}(L, \eta)$ is Hermitian.
        \end{enumerate}
    \end{Lemma}
    
    \begin{proof}
        (1) Let $(C', S')$ be another pair of complex matrices satisfying Lemma \ref{Lemma: L Lagrangian and matrices CS}. Thanks to Lemma \ref{Lemma: matrices CS up to U unitary}, there exists a $N \times N$ unitary matrix $U$ such that $C = C' U$ and $S = S' U$. Therefore,
        \[
        C (C - \mathbb{G}_{\eta} S)^{-1} = C'U (C'U - \mathbb{G}_{\eta} S'U)^{-1} = C' (C' - \mathbb{G}_{\eta} S')^{-1} \ .
        \]
        
        (2) For $q, p \in Q$, $p \neq q$,
        \[
        (\mathbb{G}_{\eta})_{q,p} = [G_{\eta}^{q} - \rchi_{q,2} F_q] (p) = G_{\eta}^{q}(p) = \sum_{ \substack{ \lambda \in \Spec(\sqLap) \\ \lambda^2 \neq \eta} } \frac{1}{\lambda^2 - \eta} Z_{\lambda}^{q}(p) \ .
        \]
        Since for all $\lambda \in \Spec(\sqLap)$ and all $q, p \in M$,
        \[
        Z_{\lambda}^{q}(p) = \ip{Z_{\lambda}^{p}}{Z_{\lambda}^{q}}[L^2(M)] = \ip{Z_{\lambda}^{q}}{Z_{\lambda}^{p}}[L^2(M)] = Z_{\lambda}^{p}(q)
        \]
        because $Z_{\lambda}^{q}$ is real-valued, we have that $(\mathbb{G}_{\eta})_{q,p} = (\mathbb{G}_{\eta})_{p,q}$, thus $\mathbb{G}_{\eta}$ is symmetric.
        
        (3) If we assume that $\eta \in \R$, we see that $\mathbb{G}_{\eta}$ is real-valued because $G_{\eta}^{q}$ and $\rchi_q F_q$ are then real-valued. 
        
        (4) If $\eta \in \R$, we know that $\mathbb{G}_{\eta}^* = \mathbb{G}_{\eta}$. We see that $\mathbb{A}(L, \eta)$ is Hermitian if and only if
        \[
        S (C - \mathbb{G}_{\eta} S)^{-1} = \big[ S (C - \mathbb{G}_{\eta} S)^{-1} \big]^* = (C^* - S^* \mathbb{G}_{\eta} )^{-1} S^* \ .
        \]
        This is equivalent to
        \[
        S = (C^* - S^* \mathbb{G}_{\eta})^{-1} S^* (C - \mathbb{G}_{\eta} S) \ ,
        \]
        which holds because
        \[
        S^* (C - \mathbb{G}_{\eta} S) = S^*C - S^* \mathbb{G}_{\eta} S = C^* S - S^* \mathbb{G}_{\eta} S = (C^* - S^* \mathbb{G}_{\eta} ) S \ ,
        \]
        thanks to \eqref{Eq: Prop of (C,S)}.
    \end{proof}

    We finally in position to prove \eqref{Eq: resolvent identity aux}. Define $P_{L, \eta} \coloneqq (\Lap_{L} {}- \eta)^{-1} (\Lap {}- \eta)$. We have that for all $u \in \dom(\Lap)$, $(\Lap_{L} {}- \eta) P_{L, \eta} u = (\Lap {}- \eta) u$. Since $A^*$ is an extension of $\Lap$ and $\Lap_L$, we find that $(A^* - \eta) [P_{L, \eta}u - u] = 0$. Thanks to Proposition \ref{Prop: structure of EF of A*}, since $\eta \in \C \setminus \Spec(\Lap)$ there exist a unique $\beta = (\beta_{q})_{q \in Q} \in \C[N]$, such that
    \begin{equation} \label{Eq: Lemma 2.13 aux 2}
        P_{L, \eta} u - u = \sum_{q \in Q} \beta_{q} G_{\eta}^{q}\ .
    \end{equation}
    We can calculate $\beta$ thanks to the fact that $P_{L, \eta} u$ belongs to $\dom(\Lap_L)$, and the help of the matrices we defined.

    We observe that as elements in $\dom(A^*)/\dom(A)$, we may write (see \eqref{Eq: D(A*) coordinates for Getaq})
    \begin{align*}
        P_{L, \eta} u + \dom(A) & = \sum_{p \in Q} \Big( u + \sum_{q \in Q} \beta_{q} G_{\eta}^{q} \Big) \rchi_{p,1} + \dom(A) \ , \\
        \Big( u + \sum_{q \in Q} \beta_{q} G_{\eta}^{q} \Big) \rchi_{p,1} + \dom(A) & = \Big[ u(p) + \sum_{q \in Q} \beta_{q} (G_{\eta}^{q} - \rchi_{q,2} F_{q}) (p) \Big] \rchi_{p,1} + \big[ \beta_p \big] \rchi_{p,2} F_p + \dom(A) \ .
    \end{align*}
    We may use this identity to work in coordinates. Let $(C,S)$ be a pair of complex matrices given by Lemma \ref{Lemma: L Lagrangian and matrices CS}. Since $P_{L, \eta} u$ belongs to $D(\Lap_L)$, there exist $\alpha \in E \subseteq \C[2N]$ (see \eqref{Eq: L in terms of (C,S)}) such that, with an slight abuse of notation,
    \[
    P_{L, \eta} u = \begin{bmatrix} C \\ S \end{bmatrix} \vec{\alpha} \ ,
    \]
    which in coordinates, $\vec{u} \coloneqq (u(p))_{p \in Q}$, $\vec{\alpha} \coloneq (\alpha_{p})_{p \in Q}$, $\vec{\beta} = (\beta_{q})_{q \in Q} \in \C[N]$, translates into the system of linear equations
    \[
    \vec{u} + \mathbb{G}_{\eta} \vec{\beta} = C \vec{\alpha} \ , \qquad \vec{\beta} = S \vec{\alpha} \ .
    \]
    From this, we read that $\vec{u} = (C - \mathbb{G}_{\eta} S) \vec{\alpha}$, thus $\vec{\alpha} = (C - \mathbb{G}_{\eta} S)^{-1} \vec{u}$ thanks to Lemma \ref{Lemma: eta not ev iff C-GLetaS invertible}. Therefore, the vector $\vec{\beta}$ is uniquely determined by
    \begin{equation} \label{Eq: value of vecbeta}
        \vec{\beta} = S(C - \mathbb{G}_{\eta}S)^{-1} \vec{u} = \matA(L, \eta) \vec{u} \ .
    \end{equation}
    Substituting \eqref{Eq: value of vecbeta} into \eqref{Eq: Lemma 2.13 aux 2}, we get
    \[
    P_{L, \eta} u = u + \sum_{q \in Q} \Big[ \matA(L, \eta)_{q, p} u(p) \Big] G_{\eta}^{q} \ , \qquad \forall \ u \in \dom(\Lap) \ .
    \]
    In operator terms, this is \eqref{Eq: resolvent identity}:
    \[
    (\Lap_{L} {}- \eta)^{-1} (\Lap {}- \eta) = \Id_{\dom(\Lap)} + \sum_{q,p \in Q} G_{\eta}^{q} \big[ \matA(L, \eta) \big]_{q, p} \delta_{p} \ . \qedhere
    \]
\end{proof}

We wind up this section showing how the symplectic description $\Lap_L$ and the matrix description $(\Lap + \delta_{Q} \matA \delta_{Q})$ may be related one to the other. We define the $(H^{-2}, H^2)$ pairing for $f \in H^{-2}(M)$ and $u \in H^2(M)$ as follows
\[
\ip{f}{u}[H^{-2} \times H^2] = \ip{ (\Lap {}+ 1)^{-1} f}{ (\Lap {}+ 1) u}[L^2(M)] \ .
\]
Recall that $\ip{u}{v}[L^2(M)] = \int_{M} \conj{u(x)} v(x) \D{x}$.

\begin{Corollary} \label{Cor: connection between LapL and Lap+A}
    Let $\eta \in \R \setminus \Spec(\Lap)$ and a Lagrangian subspace $L$ of $(\dom(A^*)/\dom(A), \omega)$ such that $\eta \notin \Spec(\Lap_L)$. Let $\matA(L, \eta)$ given by Theorem \ref{Thm: resolvent identity Lap LapL} and let $P_{L, \eta} \coloneqq (\Lap_{L} {}- \eta)^{-1} (\Lap {}- \eta)$. For all $u \in \dom(\Lap)$,
    \begin{equation} \label{Eq: exp energy of LapL}
        \ip*{P_{L, \eta} u}{(\Lap_{L} {}- \eta) P_{L, \eta} u}[L^2(M)] = \ip*{ (\Lap {}+ \delta_{Q} \, \matA(L, \eta) \, \delta_{Q} - \eta) u}{ u }[H^{-2} \times H^2] \ ,
    \end{equation}
    where $\delta_{Q} \matA \delta_{Q} = \sum_{q, p \in Q} \delta_{q} \matA_{q, p} \delta_{p} \colon H^2 \to H^{-2}$ is a $N$-rank mapping.
\end{Corollary}

\begin{proof}
    This is just a computation using both expressions for $P_{L, \eta}$ and the definition of $G_{\eta}^{q}$:
    \begin{align*}
        \ip*{P_{L, \eta} u}{(\Lap_{L} {}- \eta) P_{L, \eta} u}[L^2(M)] & = \ip{ u + \sum_{q, p \in Q} [\matA(L, \eta)]_{q, p} u(p) G_{\eta}^{q} }{(\Lap {}- \eta) u}[L^2(M)] \\
        & = \ip{(\Lap {}- \eta) u}{u}[L^2(M)] + \sum_{q, p \in Q} \conj{ [\matA(L, \eta)]_{q, p} u(p) } u(q) \\
        & = \ip*{ (\Lap {}+ \delta_{Q} \, \matA(L, \eta) \, \delta_{Q} - \eta) u }{u}[H^{-2} \times H^{-2}]
    \end{align*}
    In the last line we used that $(\Lap {}- \eta)$ is self-adjoint and that $\matA(L, \eta)$ is Hermitian.
\end{proof}

\subsection{Spectrum of \texorpdfstring{$\Lap_L$}{Lap L}}
\label{subSec: Point-pert of Lap. Spectrum of LapL}

\begin{Theorem} \label{Thm: structure of eigenfunction of LapL}
    $\Lap_L$ has compact resolvent for every Lagrangian subspace $L$ of $(\dom(A^*) / \dom(A), \omega)$. In addition, given any pair of matrices $(C, S)$ such that \eqref{Eq: L in terms of (C,S)} holds,
    \begin{equation} \label{Eq: Spec(LapL) equation}
        \eta \in \Spec(\Lap_L) \setminus \Spec(\Lap) \qquad \text{if and only if} \qquad \det(C - \mathbb{G}_{\eta} S) = 0 \ ,
    \end{equation}
    where $\mathbb{G}_{\eta}$ is given by \eqref{Eq: matrices for calG} (\emph{cf.} \cite[Equation 1.1]{KurlbergUeberschaer2014} and \cite[Section 2.2]{Yesha2018}).
    
    Moreover, if $u_{\eta} \in D(\Lap_L)$ is such that $\Lap_L u_{\eta} = \eta u_{\eta}$ for some $\eta \in \Spec(\Lap_L)$, there exist unique $w_{\eta} \in \ker(\Lap {}- \eta)$ and $\beta = (\beta_{q})_{q \in Q} \in \C[N]$ such that
    \begin{equation} \label{Eq: LapL eigenfunctions form}
        u_{\eta} = w_{\eta} + G_{\eta}^{Q, \beta} \ , \qquad \sum_{q \in Q} \beta_{q} \delta_{q} = 0 \quad \text{on $\ker(\Lap {}- \eta)$.}
    \end{equation}
\end{Theorem}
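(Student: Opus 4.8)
The plan is to establish the three claims in order, drawing almost entirely on results already proved in this section; the only assertion that requires genuine work is the compactness of the resolvent, and once that is in hand the other two follow formally from \cref{Lemma: eta not ev iff C-GLetaS invertible} and \cref{Prop: structure of eigenfunction of A*}. To see that $\Lap_L$ has compact resolvent, I would first use that $\Lap_L$ is self-adjoint (as $L$ is Lagrangian), so that $\Spec(\Lap_L) \subseteq \R$; since $\Spec(\Lap) \subseteq [0, \infty)$ as well, every $\eta \in \C \setminus \R$ belongs to the resolvent set of both operators. Fixing such an $\eta$, \cref{Thm: resolvent identity Lap LapL} applies; composing the identity \eqref{Eq: resolvent identity} on the right with $(\Lap - \eta)^{-1}$ gives
\[
(\Lap_L - \eta)^{-1} = (\Lap - \eta)^{-1} + \sum_{q, p \in Q} \ket{G_{\eta}^{q}}\, [\matA(L, \eta)]_{q, p} \,\bra{\delta_{p}} (\Lap - \eta)^{-1} \ .
\]
Here $(\Lap - \eta)^{-1}$ is compact because the inclusion $H^2(M) \hookrightarrow L^2(M)$ is compact, and each functional $u \mapsto [(\Lap - \eta)^{-1} u](p)$ is bounded on $L^2(M)$ thanks to the Sobolev embedding $H^2(M) \hookrightarrow \Cont^0(M)$, valid precisely when $d = 2,3$; hence the finite sum is a bounded operator of rank at most $N$. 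Therefore $(\Lap_L - \eta)^{-1}$ is compact, so $\Lap_L$ is a self-adjoint operator with compact resolvent, and its spectrum consists of eigenvalues of finite multiplicity accumulating only at $+\infty$ (in particular, the Remark following \cref{Lemma: eta not ev iff C-GLetaS invertible} is now justified).

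For the spectral equation, let $\eta \in \C \setminus \Spec(\Lap)$ and pick a pair $(C, S)$ as in \cref{Lemma: L Lagrangian and matrices CS}. Since $\Spec(\Lap_L)$ is now precisely the set of eigenvalues of $\Lap_L$, \cref{Lemma: eta not ev iff C-GLetaS invertible} tells us that $\eta \notin \Spec(\Lap_L)$ if and only if $C - \mathbb{G}_{\eta} S$ is invertible; equivalently, $\eta \in \Spec(\Lap_L) \setminus \Spec(\Lap)$ if and only if $\det(C - \mathbb{G}_{\eta} S) = 0$, which is \eqref{Eq: Spec(LapL) equation}. One should also note that this vanishing does not depend on the chosen pair $(C,S)$: by \cref{Lemma: matrices CS up to U unitary} any two admissible pairs differ by right multiplication by a unitary matrix $U$, which multiplies the determinant by $\det U$, a number of modulus one.

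For the structure of the eigenfunctions, suppose $u_{\eta} \in \dom(\Lap_L)$ satisfies $\Lap_L u_{\eta} = \eta u_{\eta}$. Since $\dom(\Lap_L) \subseteq \dom(A^*)$ and $\Lap_L$ is the restriction of $A^*$ to $\dom(\Lap_L)$, we have $A^* u_{\eta} = \eta u_{\eta}$, so \cref{Prop: structure of eigenfunction of A*} applies verbatim: it yields the decomposition \eqref{Eq: LapL eigenfunctions form}, the constraint $\sum_{q \in Q} \beta_{q} \delta_{q} = 0$ on $\ker(\Lap - \eta)$, and the uniqueness of $w_{\eta}$ and $\beta$, the latter because the two summands are orthogonal in $L^2(M)$.

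I expect the main obstacle to be the compact-resolvent step, and more specifically the fact that one must exhibit a point of the resolvent set of $\Lap_L$ before \cref{Thm: resolvent identity Lap LapL} may be invoked — this is precisely why the self-adjointness of $\Lap_L$ (hence the reality of its spectrum) has to be brought in first — together with the dimensional restriction $d \le 3$, which is what makes $\bra{\delta_{p}}(\Lap - \eta)^{-1}$ a bounded functional via $H^2(M) \hookrightarrow \Cont^0(M)$. All the remaining content is a straightforward application of \cref{Lemma: eta not ev iff C-GLetaS invertible} and \cref{Prop: structure of eigenfunction of A*}.
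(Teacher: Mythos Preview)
Your proposal is correct and follows essentially the same approach as the paper's proof. The only cosmetic difference is in the compact-resolvent step: the paper rewrites the bounded functional $\bra{\delta_{p}}(\Lap - i)^{-1}$ as $\bra{G_{-i}^{p}}$ (using the Green's function identity) to display the finite-rank piece as $\sum \ket{G_{i}^{q}}[\matA(L,i)]_{q,p}\bra{G_{-i}^{p}}$ directly in $L^2$, whereas you justify boundedness via the Sobolev embedding $H^2(M)\hookrightarrow \Cont^0(M)$; these are equivalent observations and the remaining deductions from \cref{Lemma: eta not ev iff C-GLetaS invertible} and \cref{Prop: structure of eigenfunction of A*} match the paper exactly.
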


\begin{proof}
    It is enough to prove that $(\Lap_{L} {}- \eta)^{-1} \colon L^2(M) \to L^2(M)$ is compact for $\eta = \pm i$. Since $\pm i \in \C \setminus(\Spec(\Lap) \cup \Spec(\Lap_L))$, we may invoke Theorem \ref{Thm: resolvent identity Lap LapL} and write as operators in $L^2(M)$,
    \[
    (\Lap_{L} {}- \eta)^{-1} = (\Lap {}- \eta)^{-1} + \sum_{q, p \in Q} \ket[\big]{G_{\eta}^{p}} [\matA(l, \eta)]_{p, q} \bra[\big]{G_{\eta}^{p}} \ .
    \]
    Therefore, we find that $(\Lap_{L} {}- \eta)^{-1}$ is a compact operator in $L^2(M)$ because is the sum of a compact operator and a finite-rank operator.

    Equivalence \eqref{Eq: Spec(LapL) equation} is the content of Lemma \ref{Lemma: eta not ev iff C-GLetaS invertible}, and \eqref{Eq: LapL eigenfunctions form} is a direct consequence of \eqref{Prop: structure of EF of A*}.
\end{proof}

\begin{Remark}
    Note that for every $\eta \in \R$ and every $\beta = (\beta_{q})_{q \in Q} \in \C[N]$ such that $\sum_{q \in Q} \beta_{q} \delta_{q} = 0$ in $\ker(\Lap {}- \eta)$, one may find a Lagrangian subspace $L$ such that $(G_{\eta}^{Q, \beta} + \dom(A)) \in L$. This means that there always exists some point perturbation $\Lap_L$ such that $G_{\eta}^{Q, \beta} \in \dom(\Lap_L)$, thus $G_{\eta}^{Q, \beta}$ is always an eigenfunction for some $\Lap_L$.
\end{Remark}

\section{Approximation of new eigenfunctions by quasimodes}
\label{Sec: Approximation by quasimodes}

In this section we use different asymptotics of the spectral function of $\sqLap$, which hold under no additional assumptions or under some non-focality assumptions, to find quasimodes for linear combinations of Green's functions.

The point-wise spectral function of $\sqLap$ is
\[
E(q, p; X) \coloneqq \sum_{\lambda \leq X} Z_{\lambda}^{q}(p) \ , \qquad \text{ for $X \geq 0$.}
\]
Recall that $Z_{\lambda}^{q}(q) = \norm{Z_{\lambda}^{q}}[L^2(M)] \geq 0$. For $d \in \N$, let $\nu_{d} \coloneqq \frac{\vol(B^d)}{(2\pi)^{d}}$, where $\vol(B^d)$ is the volume of the Euclidean unit ball in $\R[d]$.

In \cite{Hormander1968}, diagonal and off-diagonal asymptotics of $E(q, p; X)$ were studied. One has the following: there exists $C > 0$ and $\Lambda > 1$ such that for every $q \in M$, $X \geq \Lambda$,
\begin{equation} \label{Eq: diag asymp spectral function}
    \abs*{ E(q, q; X) - \nu_{d} X^{d} } \leq C X^{d-1} \ ,
\end{equation}
and given a compact subset $K \subseteq \{ (q, p) \in M^2 \colon q \neq p \}$, there exists $C_K > 0$ and $\Lambda_{K} > 1$ such that for every $(q, p) \in K$, $X \geq \Lambda_{K}$ \ ,
\begin{align} \label{Eq: off-diag asymp spectral function}
    \abs*{ E(q,p; X) } \leq C_K X^{d-1} \ .
\end{align}

In \cite[Theorem 1.2]{SoggeZelditch2002}, \eqref{Eq: diag asymp spectral function} was improved under the condition that $q$ is non-self-focal \eqref{Eq: self-focal dir set q}: for every $\eps > 0$ there exists $\Lambda_{\eps} > 1$ such that for all $X \geq \Lambda_{\eps}$,
\begin{equation} \label{Eq: diag asymp spectral function improvement}
    \abs*{ E(q, q; X) - \nu_{d} X^{d} } \leq \eps X^{d-1} \ .
\end{equation}
In \cite[Theorem 3.3]{Safarov1989}, \eqref{Eq: off-diag asymp spectral function} was improved under the assumption that every pair $q,p \in K$ are mutually non-focal \eqref{Eq: mutually focal focal dir set qp} and either $q$ or $p$ is non-self-focal: for every $\eps > 0$ there exists $\Lambda_{K, \eps} > 1$ such that for every $q, p \in K$, $X \geq \Lambda_{K, \eps}$,
\begin{align} \label{Eq: off-diag asymp spectral function improvement}
    \abs*{ E(q,p; X) } \leq \eps X^{d-1} \ .
\end{align}

Let us introduce the semiclassical parameter $h > 0$, that will later tend to $0^+$. All these asymptotics on the spectral function $E(q, p; h^{-1})$ for small $h$ have consequences on the point-wise asymptotics on the family of quasimodes of $\sqLap$
\begin{equation} \label{Eq: quasimode Zhgammaq def}
    Z_{h, \gamma}^{q}(p) \coloneqq E(q, p; h^{-1} + \gamma) - E(q, p; h^{-1}) = \sum_{h^{-1} < \lambda \leq h^{-1} + \gamma} Z_{\lambda}^{q}(p)
\end{equation}
as $h \to 0^+$. Note that quasimodes $Z_{h, \gamma}^{q}$ enjoy the following property
\begin{equation} \label{Eq: zonal quasimodes ip}
    Z_{h, \gamma}^{q}(p) = \ip{Z_{h, \gamma}^{p}}{Z_{h, \gamma}^{q}}[L^2(M)] \qquad \forall \, q, p \in M \ .
\end{equation}

\begin{Lemma} \label{Lemma: quasimode point-wise asymp}
    Let $(M,g)$ be a closed Riemannian manifold of dimension $d \geq 2$ and fix some $q \in M$. There exist $C_0 > 0$ such that for every $\gamma > 0$ there exists $h_{\gamma} > 0$ such for every $0 < h < h_{\gamma}$
    \begin{equation} \label{Eq: diagonal quasimode asymp}
        \abs*{ Z_{h, \gamma}^{q} (q) - \nu_{d} d \gamma h^{1 - d} } \leq C_0 h^{1-d} \ .
    \end{equation}
    In addition, for every compact subset $K \subseteq M \setminus \{q\}$, there exist $C(K) > 0$ such that for every $\gamma > 0$ there exists $h_{K, \gamma} > 0$ such that for all $0 < h < h_{K, \gamma}$
    \begin{equation} \label{Eq: off-diagonal quasimode asymp}
        \sup_{p \in K} \abs*{Z_{h, \gamma}^{q} (p) } \leq C(K) h^{1 - d} \ .
    \end{equation}
\end{Lemma}

\begin{proof}
    Let $C > 0$ and $\Lambda > 0$ such that \eqref{Eq: diag asymp spectral function} holds. We have that for all $0 < h < \Lambda^{-1}$,
    \[
    \abs*{ E(q, q; h^{-1}) - \nu_{d} h^{-d}} \leq C h^{1-d} \ .
    \]
    Moreover, there exists some $C_1 > 0$ such that for every $\gamma > 0$ and every $0 < h < \gamma^{-\frac{d}{d-1}}$
    \[
    \abs*{ (h^{-1} + \gamma)^{-d} - h^{-d} - d \gamma h^{1-d} } \leq C_1 h^{1-d} \ .
    \]
    Therefore, for every $\gamma > 0$ and every $0 < h < \min \{ \Lambda^{-1}, \gamma^{-\frac{d}{d-1}} \} \eqqcolon h_{\gamma}$,
    \begin{align*}
        \abs*{ Z_{h, \gamma}^{q} (q) - \nu_{d} d \gamma h^{1 - d} } & \leq \abs*{ E(q, q; h^{-1} + \gamma) - \nu_{d} (h^{-1} + \gamma)^{d} } \\
        & \qquad + \abs*{ E(q, q; h^{-1}) - \nu_{d} (h^{-1})^d } \\
        & \qquad \qquad + \nu_{d} C_1 h^{1-d} \\
        & \leq \bigg[ 2^{d-1}C + C + \nu_{d} C_1 \bigg] h^{1-d}
    \end{align*}

    Let $K' \coloneqq \{q\} \times K$ be a compact subset of $M \times M$, and let $C_{K'} > 0$ and $\Lambda_{K'} > 1$ be such that \eqref{Eq: off-diag asymp spectral function} holds for all $(q, p) \in K'$. For every $\gamma > 0$ set $h_{K, \gamma} \coloneqq \min \{\Lambda_{K'}^{-1}, \gamma^{-1}\} > 0$. Then, for all $0 < h < h_{K, \gamma}$ and all $p \in K$,
    \begin{align*}
        \abs*{ Z_{h, \gamma}^{q} (p)} & \leq \abs*{E(q, p; h^{-1} + \gamma)} + \abs*{E(q, p; h^{-1})} \\
        & \leq C_{K'} (h^{-1} + \gamma)^{d-1} + C_{K'} h^{1 - d} \\
        & \leq [2^{d-1} C_{K'} + C_{K'} ] h^{1-d} \ . \qedhere
    \end{align*}
\end{proof}

\begin{Lemma} \label{Lemma: quasimode point-wise asymp improvement}
    Let $(M,g)$ be a closed Riemannian manifold of dimension $d \geq 2$ and fix some $q \in M$. Assume that $q$ is non-self-focal \eqref{Eq: self-focal dir set q}. For every $\gamma > 0$, $\eps > 0$, there exists $h_{\gamma, \eps} > 0$ such that for every $0 < h < h_{\gamma, \eps}$,
    \begin{equation} \label{Eq: diagonal quasimode asymp small gamma}
        \abs*{ Z_{h, \gamma}^{q} (q) - \nu_{d} d \gamma h^{1 - d} } \leq \eps h^{1-d} \ .
    \end{equation}
    In addition, for every compact subset $K \subseteq M \setminus \{q\}$ such that for every $p \in K$, $q,p$ are mutually non-focal \eqref{Eq: mutually focal focal dir set qp}, the following holds. For every $\gamma > 0$, $\eps > 0$, there exist $h_{\gamma, K, \eps} > 0$ such that for all $0 < h < h_{\gamma, K, \eps}$
    \begin{equation} \label{Eq: off-diagonal quasimode asymp small gamma}
        \sup_{p \in K} \abs*{Z_{h, \gamma}^{q} (p) } \leq \eps h^{1 - d} \ .
    \end{equation}
\end{Lemma}

\begin{proof}
    For each $\eps' > 0$ let $\Lambda_{\eps'} > 1$ such that \eqref{Eq: diag asymp spectral function improvement}. Fix $\eps > 0$ and set $\eps' = \frac{1}{2} (1 + 2^{d-1})^{-1} \eps > 0$. We get that for every $0 < h < (\Lambda_{\eps'})^{-1}$,
    \[
    \abs*{ E(q, q; h^{-1}) - \nu_{d} (h^{-1})^{d} } \leq \eps' h^{1-d} \ .
    \]
    Moreover, for every $\gamma > 0$ there exists some $h(\gamma, \eps) > 0$ such that for all $0 < h < h(\gamma, \eps)$,
    \[
    \abs*{ (h^{-1} + \gamma)^{-d} - h^{-d} - d \gamma h^{1-d} } \leq \frac{\eps}{2} h^{1-d} \ .
    \]
    Therefore, for every $\gamma > 0$, every $\eps > 0$, and every $0 < h < h_{\gamma, \eps} \coloneqq \min\{ (\Lambda_{\eps'})^{-1}, h(\gamma, \eps) \}$,
    \begin{align*}
        \abs*{ Z_{h, \gamma}^{q} (q) - \nu_{d} d \gamma h^{1 - d} } & \leq \abs*{ E(q, q; h^{-1} + \gamma) - \nu_{d} (h^{-1} + \gamma)^{d} } \\
        & \qquad + \abs*{ E(q, q; h^{-1}) - \nu_{d} (h^{-1})^d } \\
        & \qquad \qquad + \frac{\eps}{2} h^{1-d} \\
        & \leq \bigg[ 2^{d-1}\eps' + \eps' + \frac{\eps}{2} \bigg] h^{1-d} = \eps h^{1-d}
    \end{align*}

    Let $K' \coloneqq \{q\} \times K \subseteq M \times M$. Thanks to our assumptions on $K$, the compact theorem $K'$ satisfies the hypothesis of \cite[Theorem 3.3]{Safarov1989}, so estimate \eqref{Eq: off-diag asymp spectral function improvement} is at our disposal. Fix $\eps > 0$ and let $\eps' \coloneqq (1 + 2^{d-1})^{-1} \eps > 0$. Let $\Lambda_{K', \eps'} > 0$ such that \eqref{Eq: off-diag asymp spectral function improvement} holds. Then fix $\gamma > 0$ and set $h_{\gamma, K, \eps} \coloneqq \min\{ (\Lambda_{K', \eps'})^{-1}, \gamma^{-1} \} > 0$. For every $0 < h < h_{\gamma, K, \eps}$ and every $p \in K$,
    \begin{align*}
        \abs*{ Z_{h, \gamma}^{q} (p)} & \leq \abs*{E(q, p; h^{-1} + \gamma)} + \abs*{E(q, p; h^{-1})} \\
        & \leq \eps' (h^{-1} + \gamma)^{d-1} + \eps' h^{1 - d} \\
        & \leq \eps h^{1-d} \ . \qedhere
    \end{align*}
\end{proof}

Lemmas \ref{Lemma: quasimode point-wise asymp} and \ref{Lemma: quasimode point-wise asymp improvement} in combination with \eqref{Eq: zonal quasimodes ip} roughly say that $Z_{h, \gamma}^{q}$ and $Z_{h, \gamma}^{p}$ should become orthogonal as either $\gamma \to \infty$ or $h \to 0^+$. Lemmas \ref{Lemma: gamma for density} and \ref{Lemma: gamma for density improvement} below make this statement precise. For $\beta = (\beta_{q})_{q \in Q} \in \C[N]$, $h > 0$, $\gamma > 0$, define
\begin{equation} \label{Eq: ZhgammaQbeta def}
    Z_{h, \gamma}^{Q, \beta} \coloneqq \sum_{q \in Q} \beta_{q} Z_{h, \gamma}^{q} \ .
\end{equation}

\begin{Lemma} \label{Lemma: gamma for density}
    Let $(M, g)$ be a closed Riemannian manifold of dimension $d \geq 2$, and let $Q \subseteq M$ be a finite subset, $N \coloneqq \card Q$. There exists $C_{Q} > 0$ such that for every $\gamma > 0$ there exists $h_{Q, \gamma} > 0$ such that for all $\beta = (\beta_{q})_{q \in Q} \in \C[N]$, $0 < h < h_{Q, \gamma}$
    \begin{equation} \label{Eq: norm asymp of beta-zonal quasimode}
        \abs[\bigg]{ \norm*{Z_{h, \gamma}^{Q, \beta} }[L^2(M)]^2 - \nu_{d} d \gamma \ \norm{\beta}[\ell^2(Q)]^2 h^{1-d} } \leq C_{Q} \ \norm{\beta}[\ell^2(Q)] h^{1-d} \ ,
    \end{equation}
    where for $k = 1, 2$,
    \[
    \norm{\beta}[\ell^k(Q)] \coloneqq \bigg[ \sum_{q \in Q} \abs{\beta_{q}}^k \bigg]^{\frac{1}{k}} \ .
    \]
\end{Lemma}

\begin{proof}
    Thanks to the identity \eqref{Eq: zonal quasimodes ip} we have
    \[
    \norm[\bigg]{\sum_{q \in Q} \beta_{q} Z_{h, \gamma}^{q} }[L^2(M)]^2 = \sum_{q \in Q} \abs{\beta_{q}}^2 Z_{h, \gamma}^{q} (q) + \sum_{\substack{q, p \in Q \\ p \neq q} } \Re(\conj{\beta_{p}} \beta_{q}) Z_{h, \gamma}^{q}(p) \ .
    \]
    On the one hand, thanks to Lemma \ref{Lemma: quasimode point-wise asymp} there exists $C_{Q, 1} > 0$ such that for all $\gamma > 0$ there exists $h_{Q, \gamma} > 0$ such that for all $0 < h < h_{Q, \gamma, 1}$
    \[
    \abs*{Z_{h, \gamma}^{q}(p)} \leq C_{Q,1} h^{1-d} \qquad \forall \, p \neq q \ .
    \]
    Consequently, using that $\norm{\beta}[\ell^1(Q)] \leq \sqrt{N} \norm{\beta}[\ell^2(Q)]$,
    \[
    \abs[\bigg]{ \sum_{\substack{q, p \in Q \\ p \neq q} } \Re(\conj{\beta_{p}} \beta_{q}) Z_{h, \gamma}^{q}(p) } \leq \norm{\beta}[\ell^1(Q)]^2 \ C_{Q,1} h^{1 - d} \leq N C_{Q,1} \ \norm{\beta}[\ell^2(Q)]^2 \ h^{1-d} \ .
    \]
    On the other hand, thanks to Lemma \ref{Lemma: quasimode point-wise asymp} too, there exists $C_{Q, 2} > 0$ such that for every $\gamma > 0$ there exists $h_{Q, \gamma, 2} > 0$ such that for all $0 < h < h_{Q, \gamma, 2}$ and all $q \in Q$,
    \[
    \abs*{Z_{h, \gamma}^{q}(q) - \nu_{d} d \gamma h^{1-d} } \leq C_{Q, 2} h^{1 - d} \ .
    \]
    Therefore, for all $0 < h < h_{Q, \gamma, 2}$,
    \[
    \abs[\bigg]{\sum_{q \in Q} \abs{\beta_{q}}^2 Z_{h, \gamma}^{q} (q) - \nu_{d} d \gamma \ \norm{\beta}[\ell^2(Q)]^2 h^{1-d} } \leq C_{Q, 2} \ \norm{\beta}[\ell(Q)]^2 h^{1-d} \ .
    \]
    Putting everything together, we find that for all $\gamma > 0$ and all $0 < h < h_{Q, \gamma} \coloneqq \min \{ h_{Q, \gamma, 1}, h_{Q, \gamma, 2}\}$,
    \[
    \abs[\bigg]{ \norm[\bigg]{\sum_{q \in Q} \beta_{q} Z_{h, \gamma}^{q} }[L^2(M)]^2 - \nu_{d} d \gamma \ \norm{\beta}[\ell^2(Q)]^2 h^{1-d} } \leq \Big[ N C_{Q, 1} + C_{Q, 2} \Big] \norm{\beta}[\ell^2(Q)]^2 h^{1-d} \ .
    \]
\end{proof}

\begin{Lemma} \label{Lemma: gamma for density improvement}
    Let $(M, g)$ be a closed Riemannian manifold of dimension $d \geq 2$, and let $Q \subseteq M$ be a finite subset, $N \coloneqq \card Q$. Assume that $Q$ is such the following point-wise asymptotics hold.
    \begin{enumerate}
        \item For every $\gamma > 0$, $\eps > 0$ there exists $h_{Q, \gamma, \eps, 1} > 0$ such that \eqref{Eq: diagonal quasimode asymp small gamma} holds for all $0 < h < h_{Q, \gamma, \eps, 1}$ and all $q \in Q$.
        \item For every $\gamma > 0$, $\eps > 0$ there exists $h_{Q, \gamma, \eps, 2} > 0$ such that \eqref{Eq: off-diagonal quasimode asymp small gamma} holds for all $0 < h < h_{Q, \gamma, \eps, 2}$ and all $q, p \in Q$, $p \neq q$.
    \end{enumerate}
    Then, for every $\gamma > 0$ and every $\eps > 0$ there exists $h_{Q, \gamma, \eps} > 0$ such that for all $\beta = (\beta_{q})_{q \in Q} \in \C[N]$, $0 < h < h_{Q, \gamma, \eps}$
    \begin{equation} \label{Eq: norm asymp of beta-zonal quasimode improvement}
        \abs[\bigg]{ \norm*{Z_{h, \gamma}^{Q, \beta} }[L^2(M)]^2 - \nu_{d} d \gamma \ \norm{\beta}[\ell^2(Q)]^2 h^{1-d} } \leq \eps \ \norm{\beta}[\ell^2(Q)]^2 h^{1-d} \ ,
    \end{equation}
    where for $k = 1, 2$,
    \[
    \norm{\beta}[\ell^k(Q)] \coloneqq \bigg[ \sum_{q \in Q} \abs{\beta_{q}}^k \bigg]^{\frac{1}{k}} \ .
    \]
\end{Lemma}

\begin{Remark}
    Hypothesis (1) and (2) hold as soon as $Q$ is non-self-focal (see \eqref{Eq: self-focal dir set q}, \eqref{Eq: mutually focal focal dir set qp}) thanks to Lemma \ref{Lemma: quasimode point-wise asymp improvement}.
\end{Remark}

\begin{proof}
    Fix $\gamma > 0$ and $\eps > 0$. Thanks to the identity \eqref{Eq: zonal quasimodes ip} we have
    \[
    \norm[\bigg]{\sum_{q \in Q} \beta_{q} Z_{h, \gamma}^{q} }[L^2(M)]^2 = \sum_{q \in Q} \abs{\beta_{q}}^2 Z_{h, \gamma}^{q} (q) + \sum_{\substack{q, p \in Q \\ p \neq q} } \Re(\conj{\beta_{p}} \beta_{q}) Z_{h, \gamma}^{q}(p) \ .
    \]
    On the one hand, because of (2) on the hypothesis there exists $h_{Q, \gamma, \eps, 1} > 0$ such that for all $0 < h < h_{Q, \gamma, \eps, 1}$
    \[
    \abs*{Z_{h, \gamma}^{q}(p)} \leq \frac{\eps}{2N} h^{1-d} \qquad \forall \, p \neq q \ .
    \]
    Consequently, using that $\norm{\beta}[\ell^1(Q)] \leq \sqrt{N} \norm{\beta}[\ell^2(Q)]$,
    \[
    \abs[\bigg]{ \sum_{\substack{q, p \in Q \\ p \neq q} } \Re(\conj{\beta_{p}} \beta_{q}) Z_{h, \gamma}^{q}(p) } \leq \norm{\beta}[\ell^1(Q)]^2 \ \frac{\eps}{2N} h^{1 - d} \leq \frac{\eps}{2} \ \norm{\beta}[\ell^2(Q)]^2 \ h^{1-d} \ .
    \]
    On the other hand, because of (1) on the hypothesis there exists $h_{Q, \gamma, \eps, 2} > 0$ such that for all $0 < h < h_{Q, \gamma, \eps, 2}$ and all $q \in Q$,
    \[
    \abs*{Z_{h, \gamma}^{q}(q) - \nu_{d} d \gamma h^{1-d} } \leq \frac{\eps}{2} h^{1 - d} \ .
    \]
    Therefore, for all $0 < h < h_{Q, \gamma, \eps', 2}$,
    \[
    \abs[\bigg]{\sum_{q \in Q} \abs{\beta_{q}}^2 Z_{h, \gamma}^{q} (q) - \nu_{d} d \gamma \ \norm{\beta}[\ell^2(Q)]^2 h^{1-d} } \leq \frac{\eps}{2} \ \norm{\beta}[\ell(Q)]^2 h^{1-d} \ .
    \]
    Putting everything together, we find that for all $0 < h < h_{Q, \gamma, \eps} \coloneqq \min \{ h_{Q, \gamma, \eps, 1}, h_{Q, \gamma, \eps, 2}\}$,
    \[
    \abs[\bigg]{ \norm[\bigg]{\sum_{q \in Q} \beta_{q} Z_{h, \gamma}^{q} }[L^2(M)]^2 - \nu_{d} d \gamma \ \norm{\beta}[\ell^2(Q)]^2 h^{1-d} } \leq \eps \norm{\beta}[\ell^2(Q)]^2 h^{1-d} \ .
    \]
\end{proof}

We are finally in position to present and prove the theorem at the core of the paper. Since we are interested in the semiclassical regime, we introduce a small notational change that will last for the rest of the paper. We will no longer be interested in the functions $G_{\eta}^{q}$ for arbitrary $\eta \in \R$, but only for positive $\eta$, hence for $h > 0$ and $q \in M$ we set
\begin{equation} \label{Eq: def Ghq}
    G_{h}^{q} \coloneqq \sum_{ \substack{ \lambda \in \Spec(\sqLap) \\ \lambda \neq h^{-1} } } \frac{1}{\lambda^2 - h^{-2}} Z_{\lambda}^{q} \ .
\end{equation}
For a finite set of points $Q \subseteq M$ and a complex vector $\beta = (\beta_{q})_{q \in Q} \in \C[N]$, $N = \card Q$, set the following linear combination
\begin{equation} \label{Eq: def GhQbeta}
    G_{h}^{Q, \beta} \coloneqq \sum_{q \in Q} \beta_{q} G_{h}^{q} \ .
\end{equation}
In addition, define
\begin{equation} \label{Eq: def ghQbeta}
    g_{h}^{Q, \beta} \coloneqq \frac{1}{\norm*{G_{h}^{Q, \beta}}[L^2(M)]} G_{h}^{Q, \beta} \ .
\end{equation}
Observe that for $z \in \C \setminus \{ 0\}$, $g_{h}^{Q, z \beta} = \frac{z}{ \abs{z} } g_{h}^{Q, \beta}$.

Let $\Pi_I$ be the spectral projector of $\sqLap$ onto the interval $I \subseteq \R$, $\Pi_I = \charf_{I}(\sqLap)$. For any $r > 0$, $h > 0$, set $I_{h}(r) \coloneqq (h^{-1} - r,h^{-1} + r] \subseteq \R$.

\begin{Theorem} \label{Thm: quasimodes for GF}
    Let $(M, g)$ be a closed Riemannian manifold of dimension $d = 2$ or $d = 3$, and let $Q \subseteq M$ be a finite set of points, $N \coloneqq \card Q$. The following holds
    \begin{enumerate}
        \item There exists $C = C(Q) > 0$, $\gamma = \gamma(Q) > 0$, such that the following holds: for all $\Upsilon \in \N$ there exists $h_0 = h_{0}(Q, \Upsilon) > 0$ such that for all $\beta = (\beta_{q})_{q \in Q} \in \C[N]$, $0 < h < h_0$,
        \begin{equation} \label{Eq: quasimode gh}
            \norm*{g_{h}^{Q, \beta} - \Pi_{I_{h}(\gamma \Upsilon)} g_{h}^{Q, \beta}}[L^2(M)] \leq C \Upsilon^{-1/2} \ .
        \end{equation}
        \item Assume that $Q$ is such the following point-wise asymptotics hold.
        \begin{enumerate}
            \item For every $\gamma > 0$, $\eps > 0$ there exists $h_{Q, \gamma, \eps, 1} > 0$ such that \eqref{Eq: diagonal quasimode asymp small gamma} holds for all $0 < h < h_{Q, \gamma, \eps, 1}$ and all $q \in Q$.
            \item For every $\gamma > 0$, $\eps > 0$ there exists $h_{Q, \gamma, \eps, 2} > 0$ such that \eqref{Eq: off-diagonal quasimode asymp small gamma} holds for all $0 < h < h_{Q, \gamma, \eps, 2}$ and all $q, p \in Q$, $p \neq q$.
        \end{enumerate}
        There exists $C = C(Q) > 0$ such that for every $\gamma > 0$ the following holds: there exists $h_0 = h_0(Q, \gamma) > 0$ such that for all $\beta = (\beta_{q})_{q \in Q} \in \C[N]$, $0 < h < h_0$,
        \begin{equation} \label{Eq: quasimode gh improvement}
            \norm*{g_{h}^{Q, \beta} - \Pi_{I_{h}(\gamma)} g_{h}^{Q, \beta}}[L^2(M)] \leq C \gamma^{1/2} \ .
        \end{equation}
    \end{enumerate}
\end{Theorem}

\subsection{Proof of \texorpdfstring{Theorem \ref{Thm: quasimodes for GF}}{Theorem quasimodes for GF}}
\label{subSec: Approximation by quasimodes. Proof of Thm: quasimodes for GF} 

In every summation, the index $\lambda$ will always run through $\Spec(\sqLap) \setminus \{h^{-1}\}$, so we simply write $\sum_{\lambda}$ instead. Applying Pitagorean theorem to $G_{h}^{Q, \beta}$ we find that
\begin{equation} \label{Eq: Ghnorm eigenfun expansion}
    \norm*{G_{h}^{Q, \beta}}[L^2]^2 = \sum_{\lambda} \frac{1}{(\lambda^2 - h^{-2})^2} \norm[\bigg]{ \sum_{q \in Q} \beta_{q} Z_{\lambda}^{q}}[L^2]^2 \ .
\end{equation}
Understanding a series over the spectrum of an operator is much more difficult than understanding a series over $\N$ or over $\Z$. This is what motivates us to split the eigenfunction expansion of $G_{h}^{Q, \beta}$ into a countable collection of intervals of length $\gamma$; this is where quasimodes $Z_{h, \gamma}^{Q, \beta}$ come into play.

\subsubsection{Proof of \texorpdfstring{Theorem \ref{Thm: quasimodes for GF}(1)}{Theorem quasimodes(1)}}
\label{subsubSec: Proof of Thm quasimodes for GF(1)}
Let $C_{Q} > 0$ be given by Lemma \ref{Lemma: gamma for density}, choose
\[
\gamma = \gamma_{Q} \coloneqq \frac{C_{Q} + 1}{d \nu_{d}} \ ,
\]
and let $h_{Q, \gamma} > 0$ be given by Lemma \ref{Lemma: gamma for density} so that
\[
\abs[\bigg]{ \norm*{Z_{h, \gamma}^{Q, \beta} }[L^2(M)]^2 - \nu_{d} d \gamma \ \norm{\beta}[\ell^2(Q)]^2 h^{1-d} } \leq C_{Q} \ \norm{\beta}[\ell^2(Q)]^2 h^{1-d} \qquad \forall \, 0 < h < h_{Q, \gamma} \ .
\]
Let $\Upsilon \in \N$, set $h_{0}(Q, \Upsilon) \coloneqq \min\{ h_{Q, \gamma}, \frac{1}{2} \gamma^{-1}, \Upsilon^{-1} \}$, and write
\[
G_{h}^{Q, \beta} = \Pi_{I_h (\gamma \Upsilon)} G_{h}^{Q, \beta} + [\Id - \Pi_{I_h (\gamma \Upsilon)}] G_{h}^{Q, \beta} \ .
\]
We will prove there exists $C = C(Q) > 0$ (independent of $\Upsilon$) such that for all $\beta = (\beta_{q})_{q \in Q} \in \C[N]$, $0 < h < h_0$,
\begin{align*} 
    \norm*{\Pi_{I_{h}(\Upsilon \gamma)} G_{h}^{Q, \beta} }[L^2(M)]^2 & \geq \frac{1}{9\gamma^2} \norm{\beta}[\ell^2(Q)]^2 h^{3-d} \ ,\\
    \norm*{G_{h_n}^{q} - \Pi_{I_{h}(\Upsilon \gamma)} G_{h}^{Q, \beta} }[L^2(M)]^2 & \leq C \norm{\beta}[\ell^2(Q)]^2 \frac{h^{3-d}}{\Upsilon} \ .
\end{align*}
From these estimates, \eqref{Eq: quasimode gh} follows.

A lower-bound for the $L^2$ norm of $\Pi_{I_h(\gamma \Upsilon)} G_{h}^{Q, \beta}$ is very simple to obtain thanks to Lemma \ref{Lemma: gamma for density} and our choice of $\gamma$:
\begin{equation} \label{Eq: Thm large quasimode Gh aux 1}
\begin{aligned}
    \norm*{\Pi_{I_h(\gamma \Upsilon)} G_{h}^{Q, \beta} }[L^2(M)]^2 & \geq \sum_{h^{-1} < \lambda \leq h^{-1} + \gamma} \frac{1}{(\lambda^2 - h^{-2})^2} \norm[\bigg]{ \sum_{q \in Q} \beta_{q} Z_{\lambda}^{q} }[L^2(M)]^2 \\
    & \geq \frac{1}{\gamma^2} \frac{1}{(2h^{-1} + \gamma)^2} \norm*{Z_{h, \gamma}^{Q, \beta}}[L^2(M)]^2 \\
    & \geq \frac{1}{\gamma^2} \frac{1}{(3h^{-1})^2} \norm{\beta}[\ell^2(Q)]^2 h^{1-d} = \frac{1}{9\gamma^2} \norm{\beta}[\ell^2(Q)]^2 h^{3-d} \ .
\end{aligned}
\end{equation}

To find an upper bound of the $L^2$ norm of $[\Id - \Pi_{I_h(\gamma \Upsilon)}] G_{h}^{Q, \beta}$ we need to introduce a bit notation. Let $K_h \in \N$ be the smallest integer $k$ such that $k \gamma \geq h^{-1}$, and define the collection of intervals
\begin{align*}
    I_{h, k} & \coloneqq (h^{-1} + k \gamma, h^{-1} + (k +1) \gamma] \qquad k > -K_h \ , \\
    I_{h, K_h} & \coloneqq [0, \gamma] \ .
\end{align*}
We further split $[\Id - \Pi_{I_h(\gamma \Upsilon)}] G_{h}^{Q, \beta}$ into a lower-tail series and a upper-tail series:
\[
\norm*{ [\Id - \Pi_{I_h(\gamma \Upsilon)}] G_{h}^{Q, \beta} }[L^2(M)]^2 = \bigg[ \sum_{k = -K_h}^{-\Upsilon-1} + \sum_{k = \Upsilon}^{\infty} \bigg] \sum_{\lambda \in I_{h, k}} \frac{1}{(\lambda^2 - h^{-2})^2} \norm[\bigg]{\sum_{q \in Q} \beta_{q} Z_{\lambda}^{q}}[L^2(M)]^2 \ .
\]
For each $k > - K_h$ define $h_{k, \gamma}^* \coloneqq (h^{-1} + k \gamma)^{-1}$, so that $I_{h, k} = ( (h_{k, \gamma}^*)^{-1}, (h_{k, \gamma}^*)^{-1} + \gamma]$. On each interval $I_{h,k}$, $k \geq \Upsilon$, we may obtain the following upper bound
\begin{align*}
\sum_{\lambda \in I_{h, k}} \frac{1}{(\lambda^2 - h^{-2})^2} \norm[\bigg]{\sum_{q \in Q} \beta_{q} Z_{\lambda}^{q}}[L^2(M)]^2 & \leq \frac{1}{(k \gamma)^2} \frac{1}{(2h^{-1} + k \gamma)^2} \norm*{Z_{h_{k, \gamma}^*, \gamma}^{Q, \beta}}[L^2(M)]^2 \\
& \leq \frac{1}{k^2} \frac{C_Q}{\gamma^2} \norm{\beta}[\ell^2(Q)]^2 \frac{ (h^{-1} + k \gamma)^{d-1} }{(2h^{-1} + k \gamma)^2} \\
& \leq \frac{1}{k^2} \frac{C_Q}{\gamma^2} \norm{\beta}[\ell^2(Q)]^2 h^{3-d} \ ,
\end{align*}
where we used that
\[
\frac{ (h^{-1} + k \gamma)^{d-1} }{(2h^{-1} + k \gamma)^2} \leq (2h^{-1} + k \gamma)^{d-3} = h^{3-d} (2 + k \gamma h)^{d-3} \leq h^{3-d}
\]
because $k \geq 1$ and $1 \leq d \leq 3$. Therefore, we can conclude that
\begin{equation} \label{Eq: Thm large quasimode Gh aux 2}
\begin{aligned}
    \sum_{k = \Upsilon}^{\infty} \sum_{\lambda \in I_{h, k}} \frac{1}{(\lambda^2 - h^{-2})^2} \norm[\bigg]{\sum_{q \in Q} \beta_{q} Z_{\lambda}^{q}}[L^2(M)]^2 & \leq \frac{C_Q}{\gamma^2} \norm{\beta}[\ell^2(Q)]^2 h^{3-d} \sum_{k = \Upsilon}^{\infty} \frac{1}{k^2} \\
    & \leq \frac{2C_Q}{\gamma^2} \norm{\beta}[\ell^2(Q)]^2 \frac{h^{3-d}}{\Upsilon} \ .
\end{aligned}
\end{equation}
On the other hand, on each interval $I_{h, -k}$, $\Upsilon + 1 < k < K_h$,
\begin{align*}
\sum_{\lambda \in I_{h, -k}} \frac{1}{(\lambda^2 - h^{-2})^2} \norm[\bigg]{\sum_{q \in Q} \beta_{q} Z_{\lambda}^{q}}[L^2(M)]^2 & \leq \frac{1}{((-k + 1) \gamma)^2} \frac{1}{(h^{-1})^2} \norm*{Z_{h_{-k, \gamma}^*, \gamma}^{Q, \beta}}[L^2(M)]^2 \\
& \leq \frac{1}{(k - 1)^2} \frac{C_Q}{\gamma^2} \norm{\beta}[\ell^2(Q)]^2 \frac{ (h^{-1} - k \gamma)^{d-1} }{h^{-2}} \\
& \leq \frac{1}{(k - 1)^2} \frac{C_Q}{\gamma^2} \norm{\beta}[\ell^2(Q)]^2 h^{3-d} \ ,
\end{align*}
where we used that $k \geq 2$ and $1 \leq d$. Meanwhile, on the interval $I_{h, K_h}$,
\[
\sum_{0 \leq \lambda \leq \gamma^2} \frac{1}{(\lambda^2 - h^{-2})^2} \norm[\bigg]{\sum_{q \in Q} \beta_{q} Z_{\lambda}^{q}}[L^2(M)]^2 \leq \frac{h^4}{(1 - h^2 \gamma^2)^2} c_{Q} \norm{\beta}[\ell^2(Q)]^2 \leq \frac{16}{9} c_{Q} \norm{\beta}[\ell^2(Q)]^2 h^{4} \ ,
\]
where we chose $c_{Q} > 0$ such that (recall that $\gamma$ depends on $Q$ but is fixed)
\[
\sum_{0 \leq \lambda \leq \gamma} \norm[\bigg]{\sum_{q \in Q} \beta_{q} Z_{\lambda}^{q}}[L^2(M)]^2 \leq c_Q \norm{\beta}[\ell^2(Q)]^2 \ .
\]
Therefore, we find that for all $0 < h < h_{0}$,
\begin{equation} \label{Eq: Thm large quasimode Gh aux 3}
    \begin{aligned}
    \sum_{k = -K_h}^{-\Upsilon - 1} \sum_{\lambda \in I_{h, k}} \frac{1}{(\lambda^2 - h^{-2})^2} \norm[\bigg]{\sum_{q \in Q} \beta_{q} Z_{\lambda}^{q}}[L^2(M)]^2 & \leq \frac{2C_Q}{\gamma^2} \norm{\beta}[\ell^2(Q)]^2 \frac{h^{3-d}}{\Upsilon} + \frac{16}{9} c_{Q} \norm{\beta}[\ell^2(Q)]^2 h^{4} \\
    & \leq \bigg[ \frac{2C_Q}{\gamma^2} + \frac{16}{9} c_{Q} \bigg] \norm{\beta}[\ell^2(Q)]^2 \frac{h^{3-d}}{\Upsilon} \ .
    \end{aligned}
\end{equation}
A combination of \eqref{Eq: Thm large quasimode Gh aux 1}, \eqref{Eq: Thm large quasimode Gh aux 2}, and \eqref{Eq: Thm large quasimode Gh aux 3} let us conclude the result.

\subsubsection{Proof of \texorpdfstring{Theorem \ref{Thm: quasimodes for GF}(2)}{Theorem quasimodes(2)}}
\label{subsubSec: Proof of Thm quasimodes for GF(2)}
Under the additional assumptions (a) and (b) on $Q$, the width $\gamma$ of the spectral window can be made arbitrarily small thanks to Lemma \ref{Lemma: gamma for density improvement}.

Fix $\gamma \in (0,1)$ and set
\[
\eps_{\gamma} \coloneqq \tfrac{1}{2} \nu_{d} d \gamma^2 > 0 \ .
\]
Let $h_{Q, \gamma^2, \eps_{\gamma}} > 0$ be given by Lemma \ref{Lemma: gamma for density improvement} such that
\[
\abs[\bigg]{ \norm*{Z_{h, \gamma^2}^{Q, \beta} }[L^2(M)]^2 - \nu_{d} d \gamma^2 \ \norm{\beta}[\ell^2(Q)]^2 h^{1-d} } \leq \eps_{\gamma} \ \norm{\beta}[\ell^2(Q)]^2 h^{1-d} \ ,\qquad \forall \, 0 < h < h_{Q, \gamma^2, \eps_{\gamma}} \ .
\]
Set $h_0 = h_{0}(Q, \gamma) \coloneqq \min \{ h_{Q, \gamma^2, \eps_{\gamma}}, \frac{1}{2} \gamma \} > 0$. We decompose $G_{h}^{Q, \beta}$,
\[
G_{h}^{Q, \beta} = \Pi_{I_h(\gamma)} G_{h}^{Q, \beta} + [\Id - \Pi_{I_h(\gamma)}] G_{h}^{Q, \beta} \ .
\]
We will prove there exists $C = C(Q) > 0$ (independent of $\gamma$) such that for all $\beta = (\beta_{q})_{q \in Q} \in \C[N]$, $0 < h < h_0$,
\begin{align*}
    \norm*{\Pi_{I_{h}(\gamma)} G_{h}^{Q, \beta} }[L^2(M)]^2 & \geq \frac{1}{C} \norm{\beta}[\ell^2(Q)]^2 \frac{h^{d-3}}{\gamma^2} \ , \\
    \norm*{G_{h}^{q} - \Pi_{I_{h}(\gamma)} G_{h}^{Q, \beta} }[L^2(M)]^2 & \leq C \norm{\beta}[\ell^2(Q)]^2 \frac{h^{3-d}}{\gamma} \ .
\end{align*}
From this estimates, \eqref{Eq: quasimode gh improvement} follows.

A good lower-bound for the $L^2$ norm of $\Pi_{I_h(\gamma)} G_{h}^{Q, \beta}$ is very simple to obtain thanks to Lemma \ref{Lemma: gamma for density improvement} and our choice of $\eps_{\gamma}$:
\begin{equation} \label{Eq: Thm small quasimode Gh aux 1}
\begin{aligned}
    \norm*{\Pi_{I_h(\gamma)} G_{h}^{Q, \beta} }[L^2(M)]^2 & \geq \sum_{h^{-1} < \lambda \leq h^{-1} + \gamma^2} \frac{1}{(\lambda^2 - h^{-2})^2} \norm[\bigg]{ \sum_{q \in Q} \beta_{q} Z_{\lambda}^{q} }[L^2(M)]^2 \\
    & \geq \frac{1}{\gamma^4} \frac{1}{(2h^{-1} + \gamma^2)^2} \norm*{Z_{h, \gamma^2}^{Q, \beta}}[L^2(M)]^2 \\
    & \geq \frac{1}{\gamma^4} \frac{1}{(3h^{-1})^2} \bigg[ \frac{1}{2} \nu_{d} d \gamma^2 \norm{\beta}[\ell^2(Q)]^2 h^{1-d} \bigg] \\
    & = \frac{1}{18} \nu_{d} d \norm{\beta}[\ell^2(Q)]^2 \frac{h^{3-d}}{\gamma^2} \ .
\end{aligned}
\end{equation}

To find a better upper bound of the $L^2$ norm of $[\Id - \Pi_{I_h(\gamma \Upsilon)}] G_{h}^{Q, \beta}$ we proceed as we did before, splitting into intervals of length $\gamma^2$. Let $K_h \in \N$ be the smallest integer $k$ such that $k \gamma^2 \geq h^{-1}$, and define the collection of intervals
\begin{align*}
    I_{h, k} & \coloneqq (h^{-1} + k \gamma^2, h^{-1} + (k +1) \gamma^2] \qquad k > -K_h \ , \\
    I_{h, K_h} & \coloneqq [0, \gamma^2] \ .
\end{align*}
Note that the length of $I_{h,k}$ is $\gamma^2$ this time. Let $J = \floor{\gamma^{-1}}$ be the greatest integer $j$ such that $j \leq \gamma^{-1}$. We further split $[\Id - \Pi_{I_h(\gamma)}] G_{h}^{Q, \beta}$ into a lower-tail series and a upper-tail series:
\[
\norm*{ [\Id - \Pi_{I_h(\gamma)}] G_{h}^{Q, \beta} }[L^2(M)]^2 \leq \bigg[ \sum_{k = -K_h}^{-J-1} + \sum_{k = J}^{\infty} \bigg] \sum_{\lambda \in I_{h, k}} \frac{1}{(\lambda^2 - h^{-2})^2} \norm[\bigg]{\sum_{q \in Q} \beta_{q} Z_{\lambda}^{q}}[L^2(M)]^2 \ .
\]
For each $k > - K_h$ define $h_{k, \gamma}^* \coloneqq (h^{-1} + k \gamma^2)^{-1}$, so that $I_{h, k} = ( (h_{k, \gamma}^*)^{-1}, (h_{k, \gamma}^*)^{-1} + \gamma^2]$. On each interval $I_{h,k}$, $k \geq J$, we may obtain the following upper bound
\begin{align*}
\sum_{\lambda \in I_{h, k}} \frac{1}{(\lambda^2 - h^{-2})^2} \norm[\bigg]{\sum_{q \in Q} \beta_{q} Z_{\lambda}^{q}}[L^2(M)]^2 & \leq \frac{1}{(k \gamma^2)^2} \frac{1}{(2h^{-1} + k \gamma^2)^2} \norm*{Z_{h_{k, \gamma}^*, \gamma^2}^{Q, \beta}}[L^2(M)]^2 \\
& \leq \frac{1}{k^2 \gamma^4} \frac{1}{(2h^{-1} + k \gamma^2)^2} \bigg[\frac{3}{2} \nu_{d} d \gamma^2 \norm{\beta}[\ell^2(Q)]^2 (h^{-1} + k \gamma^2)^{d-1}\bigg] \\
& \leq \frac{1}{k^2} \frac{3}{2} \nu_{d} d \norm{\beta}[\ell^2(Q)]^2 \frac{h^{3-d}}{\gamma^2} \ ,
\end{align*}
where we used that
\[
\frac{ (h^{-1} + k \gamma^2)^{d-1} }{(2h^{-1} + k \gamma^2)^2} \leq (2h^{-1} + k \gamma^2)^{d-3} = h^{3-d} (2 + k \gamma^2 h)^{d-3} \leq h^{3-d}
\]
because $k \geq 1$ and $1 \leq d \leq 3$. Therefore, we can conclude that
\begin{equation} \label{Eq: Thm small quasimode Gh aux 2}
\begin{aligned}
    \sum_{k = J}^{\infty} \sum_{\lambda \in I_{h, k}} \frac{1}{(\lambda^2 - h^{-2})^2} \norm[\bigg]{\sum_{q \in Q} \beta_{q} Z_{\lambda}^{q}}[L^2(M)]^2 & \leq \frac{3}{2} \nu_{d} d \norm{\beta}[\ell^2(Q)]^2 \frac{h^{3-d}}{\gamma^2} \sum_{k = J}^{\infty} \frac{1}{k^2} \\
    & \leq 3 \nu_{d} d \norm{\beta}[\ell^2(Q)]^2 \frac{h^{3-d}}{\gamma^2} \frac{1}{J}\\
    & \leq 6 \nu_{d} d \norm{\beta}[\ell^2(Q)]^2 \frac{h^{3-d}}{\gamma} \ .
\end{aligned}
\end{equation}
On the other hand, on each interval $I_{h, -k}$, $J + 1 < k < K_h$,
\begin{align*}
\sum_{\lambda \in I_{h, -k}} \frac{1}{(\lambda^2 - h^{-2})^2} \norm[\bigg]{\sum_{q \in Q} \beta_{q} Z_{\lambda}^{q}}[L^2(M)]^2 & \leq \frac{1}{((-k + 1) \gamma^2)^2} \frac{1}{(h^{-1})^2} \norm*{Z_{h_{-k, \gamma}^*, \gamma^2}^{Q, \beta}}[L^2(M)]^2 \\
& \leq \frac{1}{(k - 1)^2\gamma^4} h^2 \bigg[\frac{3}{2} \nu_{d} d \norm{\beta}[\ell^2(Q)]^2 \gamma^2  (h^{-1} - k \gamma)^{d-1} \bigg] \\
& \leq \frac{1}{(k - 1)^2} \frac{3}{2} \nu_{d} d \norm{\beta}[\ell^2(Q)]^2 \frac{h^{3-d}}{\gamma^2} \ ,
\end{align*}
where we used that $k \geq 2$ and $1 \leq d$. Meanwhile, on the interval $I_{h, K_h}$,
\[
\sum_{0 \leq \lambda \leq \gamma^2} \frac{1}{(\lambda^2 - h^{-2})^2} \norm[\bigg]{\sum_{q \in Q} \beta_{q} Z_{\lambda}^{q}}[L^2(M)]^2 \leq \frac{h^4}{(1 - h^2 \gamma^4)^2} c_{Q} \norm{\beta}[\ell^2(Q)]^2 \leq \frac{16}{9} c_{Q} \norm{\beta}[\ell^2(Q)]^2 h^{4} \ ,
\]
where we chose $c_{Q} > 0$ such that
\[
\sum_{0 \leq \lambda \leq 1} \norm[\bigg]{\sum_{q \in Q} \beta_{q} Z_{\lambda}^{q}}[L^2(M)]^2 \leq c_{Q} \norm{\beta}[\ell^2(Q)]^2 \ .
\]
Therefore, we find that
\begin{equation} \label{Eq: Thm small quasimode Gh aux 3}
    \sum_{k = -K_h}^{-\Upsilon - 1} \sum_{\lambda \in I_{h, k}} \frac{1}{(\lambda^2 - h^{-2})^2} \norm[\bigg]{\sum_{q \in Q} \beta_{q} Z_{\lambda}^{q}}[L^2(M)]^2 \leq \frac{3}{2} \nu_{d} d \norm{\beta}[\ell^2(Q)]^2 \frac{h^{3-d}}{\gamma} + \frac{16}{9} c_{Q} \norm{\beta}[\ell^2(Q)]^2 h^{4} \ .
\end{equation}
A combination of \eqref{Eq: Thm small quasimode Gh aux 1}, \eqref{Eq: Thm small quasimode Gh aux 2}, and \eqref{Eq: Thm small quasimode Gh aux 3} let us conclude the result. $\hfill \qed$

\section{Properties of semiclassical defect measures}
\label{Sec: Properties of SDM}

We prove Theorem \ref{Thm: main theorem spectral cond} below, which together with Lemma \ref{Lemma: gamma for density improvement} imply Theorem \ref{Thm: main theorem}. Theorem \ref{Thm: main theorem spectral cond} will be a consequence of Theorem \ref{Thm: structure of eigenfunction of LapL} and Theorem \ref{Thm: quasimodes for GF}. Let $H(x, \xi) \coloneqq \abs{\xi}_x^2$, where $\abs{\xi}_x$ is the norm of $\xi \in T_x^*M$ induced by the Riemannian metric on $M$,  the Hamiltonian on $T^*M$ whose associated Hamiltonian vector field generates the geodesic flow $\{ \phi_t\}_{t \in \R}$ on $T^*M$. The unit co-sphere bundle $S^*M = \{ (x, \xi) \in T^*M \colon \abs{\xi}_x^2 = 1\}$ is invariant under $\phi_t$.

\begin{Theorem} \label{Thm: main theorem spectral cond}
    Let $(M,g)$ be a closed Riemannian manifold of dimension $2$ or $3$ with positive Laplacian $\Lap$. Let $(\Lap_{L_n})_{n \in \N}$ be a sequence of point-perturbations of $\Lap$ on a fixed finite set $Q \subseteq M$. For every $n \in \N$, let $u_n \in D(\Lap_{L_n}) \subseteq L^2(M)$ and $h_n > 0$ satisfying \eqref{Eq: eigenf main theorem}. Any \SDM{} $\mu$ associated to the sequence $(u_n)_{n \in \N}$ enjoys the following properties:
    \begin{enumerate}
        \item $\mu$ is a probability measure supported in $S^*M$.
        \item Assume that $Q$ is such the following point-wise asymptotics hold.
        \begin{enumerate}
            \item For every $\gamma > 0$, $\eps > 0$ there exists $h_{Q, \gamma, \eps, 1} > 0$ such that \eqref{Eq: diagonal quasimode asymp small gamma} holds for all $0 < h < h_{Q, \gamma, \eps, 1}$ and all $q \in Q$.
            \item For every $\gamma > 0$, $\eps > 0$ there exists $h_{Q, \gamma, \eps, 2} > 0$ such that \eqref{Eq: off-diagonal quasimode asymp small gamma} holds for all $0 < h < h_{Q, \gamma, \eps, 2}$ and all $q, p \in Q$, $p \neq q$.
        \end{enumerate}
        Then $\mu$ is invariant under $\phi_t$.
    \end{enumerate}
\end{Theorem}

\begin{proof}
    For every $n\in \N$, let $w_n \in \ker(\Lap {}- (h_n)^{-2})$, $c_n \in \C$, and $\beta_{n} = (\beta_{n, q})_{q \in Q} \C[N]$ be given by Theorem \ref{Thm: structure of eigenfunction of LapL}:
    \begin{equation*}
        u_n = w_n + c_n \, g_{h_n}^{Q, \beta_n} \ , \qquad g_{h_n}^{Q, \beta_n} \coloneqq \frac{1}{\norm*{G_{h_n}^{Q, \beta_n}}[L^2(M)]} G_{h_n}^{Q, \beta_n} \ .
    \end{equation*}
    Up to extraction of a subsequence, we may assume that the sequence $(u_n)_{n \in \N}$ has a unique \SDM{} $\mu$ along $(h_n)_{n \in \N}$. Observe that $\norm{w_n}[L^2(M)] \leq 1$, $\abs{c_n} \leq 1$ for all $n \in \N$ because $w_n$ and $g_{h_n}^{Q, \beta_n}$ are orthogonal for all $n \in \N$. Denote $v_n \coloneqq c_n \, g_{h_n}^{Q, \beta_n}$.

    Recall the notation $I_{n}(r) = ((h_n)^{-1} - r, (h_n)^{-1} + r]$ and $\Pi_{I} = \charf_{I}(\sqLap)$. For every $r > 0$ one has
    \begin{equation} \label{Eq: Lap on quasimodes estimate}
        [h_n^2 \Lap {}- 1] \Pi_{I_n(r)} = r \BigO_{L^2 \to L^2}(h_n) \ .
    \end{equation}
    Moreover,
    \[
    \norm*{u_n - \Pi_{I_n(r)}u_n}[L^2(M)] = \norm{v_n - \Pi_{I_n(r)}v_n}[L^2(M)] \ ,
    \]
    therefore, for every $b \in \CinfK(T^*M)$ and every $n \in \N$,
    \begin{equation} \label{Eq: controlUps}
        \begin{aligned}
            \abs*{ \ip*{u_n}{\Op[h_n]{b} u_n}[L^2(M)] - \ip{\Pi_{I_n(r)}u_n}{\Op[h_n]{a} \Pi_{I_n(r)}u_n}[L^2(M)] } & \\
            & \hspace{-100pt} \leq 2 \norm{\Op[h_n]{b}}[L^2 \to L^2] \, \norm{v_n - \Pi_{I_n(r)}v_n}[L^2(M)] \ .
        \end{aligned}
    \end{equation} 
    Theorem \ref{Thm: quasimodes for GF} will allow us to control the error $u_n - \Pi_{I_n(r)} u_n$ uniformly in $n \in \N$, and therefore to obtain information on the \SDM{} $\mu$ from the quasimodes $\Pi_{I_n(r)} u_n$.

    Let us first show that $\mu$ is supported in $S^*M$. Let $C > 0$, $\gamma > 0$ and $h_0 > 0$ be given by Theorem \ref{Thm: quasimodes for GF}(i). Take $N_0 \in \N$ such that $0 < h_n < h_0$ for all $n \geq N_0$. This result ensures that for every $\Upsilon \in \N$, $n \geq N_0$,
    \[
    \norm*{v_n - \Pi_{I_n(\gamma \Upsilon)}v_n}[L^2(M)] \leq C \Upsilon^{-1/2} \ .
    \]
    Therefore, \eqref{Eq: controlUps} gives in this case that there exists $M_b > 0$ such that for all $n \geq N_0$,
    \begin{equation}\label{Eq: bdu1}
        \abs*{ \ip*{u_n}{\Op[h_n]{b} u_n}[L^2(M)] }  \leq M_b \, \Upsilon^{-1/2} + \abs*{ \ip*{ \Pi_{I_n(\gamma \Upsilon)} u_n }{\Op[h_n]{b} \Pi_{I_n(\gamma \Upsilon)} u_n }[L^2(M)] } \ .
    \end{equation}
    Fix $a \in \CinfK(T^*M)$. Since the symbolic calculus of semiclassical pseudodifferential operators gives
    \[
    \Op[h]{a (H - 1)} = \Op[h_n]{a} [h_n^2 \Lap {} - 1] + \BigO_{L^2 \to L^2}(h_n) \ ,
    \]
    we can conclude that
    \begin{multline*}
        \ip*{ \Pi_{I_n(\gamma \Upsilon)} u_n }{\Op[h_n]{a(H-1)} \Pi_{I_n(\gamma \Upsilon)} u_n}[L^2(M)] \\
        = \ip*{ \Pi_{I_n(\gamma \Upsilon)} u_n }{\Op[h_n]{a} [h_n^2 \Lap {}- 1] \Pi_{I_n(\gamma \Upsilon)} u_n}[L^2(M)] + \BigO(h_n) = \Upsilon \BigO(h_n) \ .
    \end{multline*}
    Applying \eqref{Eq: bdu1} with the test symbol $b = a(H-1)$ and taking limits as $n \to \infty$ gives:
    \[
    \abs*{ \int_{T^*M} a(x, \xi) (H(x, \xi) - 1) \mu(\D{x}, \D{\xi}) } \leq M \Upsilon^{-1/2} \qquad \forall \ \Upsilon \in \N \ ,
    \]
    and thus
    \[
    \int_{T^*M} a(x, \xi) (\abs{\xi}_x^2 - 1) \mu(\D{x}, \D{\xi}) = 0 \ .
    \]
    Since $a \in \CinfK(T^*M)$ was arbitrary, this implies $\supp \mu \subseteq S^*M$.

    Assume now that $Q$ is such that estimates \eqref{Eq: diagonal quasimode asymp small gamma} and \eqref{Eq: off-diagonal quasimode asymp small gamma} hold as stated, and let us show that $\mu$ is invariant by the geodesic flow. Fix $\gamma > 0$ and let $C = C(Q) > 0$ and $h_{0} = h_0(Q, \gamma) > 0$ be given by Theorem \ref{Thm: quasimodes for GF}(ii). Take $N_0 \in \N$ such that $0 < h_n < h_0$ for all $n \geq N_0$. This time, \eqref{Eq: controlUps} with $r = \gamma > 0$ gives the existence of $M_b > 0$ such that for any $n \geq N_0$
    \begin{equation}\label{Eq: bdu2}
        \abs*{ \ip*{u_n}{\Op[h_n]{b} u_n}[L^2(M)] } \leq M_b \gamma^{1/2} + \abs*{ \ip*{ \Pi_{I_n(\gamma)} u_n }{\Op[h_n]{b} \Pi_{I_n(\gamma)} u_n }[L^2(M)] } \ .   
    \end{equation}
    Thanks to the symbolic calculus,
    \[
    \Op[h]{\sympf{a}{H}} = \frac{i}{h_n} \big[ \Op[h_n]{a} , \, h_n^2 \Lap \big] + \BigO_{L^2 \to L^2}(h_n)    \ ,
    \]
    and we can conclude with $b = \sympf{a}{H}$ that
    \begin{multline*}
        \ip*{ \Pi_{I_n(\gamma)} u_n }{\Op[h_n]{\sympf{a}{H}} \Pi_{I_n(\gamma)} u_n}[L^2(M)] = \\
        = \frac{i}{h_n} \ip*{ \Pi_{I_n(\gamma)} u_n }{ \big[ \Op[h_n]{a} , \, h_n^2 \Lap \big] \Pi_{I_n(\gamma)} u_n}[L^2(M)] + \BigO(h_n) = (\gamma + 1)\BigO(h_n) \ .
    \end{multline*} 
    Taking limits $n \to \infty$ and $\gamma \to 0^+$ in \eqref{Eq: bdu2}, with test symbol $b = \sympf{a}{H}$ shows that
    \[
    \int_{T^*M} \{ a ,\, H \} (x, \xi) \mu(\D{x}, \D{\xi}) = 0 \ .
    \]
    Since $a \in \CinfK(T^*M)$ was arbitrary, this implies that $(\phi_t)_* \mu = \mu$ for every $t \in \R$.
\end{proof}

\printbibliography

\end{document}